\title{The hard potential relativistic Boltzmann equation in the whole space}
\author{Koya Nishimura}
\date{} 
\newtheorem{thm}{Theorem}[section]
\newtheorem{lem}[thm]{Lemma}
\newtheorem{prop}[thm]{Proposition}
\newtheorem{rem}[thm]{Remark}
\begin{document}

\maketitle
\begin{abstract}
For the hard potential relativistic Boltzmann equation in the whole space, we prove the global existence, uniqueness, and optimal time convergence rates to the relativistic Maxwellian. 
\end{abstract}
\begin{center}{
\section{Introduction}
}\end{center}
The purpose of this paper is to handle the Cauchy problem for the relativistic Boltzmann equation with hard potentials (1.5). As stated in below, for hard potentials with some restrictions, Yong Wang [6] proved the global existence and uniqueness by using the excess conservations of mass, momentum and energy (1.7) and the excess entropy inequality (1.8). Moreover, it was also shown that the solutions in the periodic domain $\mathbb{T}^3_x$ decay in time at the exponential rate. In particular, this result works even for initial perturbations large in the $L^{\infty}_{x,p}$ space. 
Robert M. Strain and Keya Zhu [5] proved that the small solutions in the whole space $\mathbb{R}^3_x$ decay at the optimal algebraic rate of $(1+t)^{-3/4}$ for soft potentials. To handle the whole space case, the $L^\infty_p (L^2_x \cap L^\infty_x)$ space is used instead of the $L^\infty_{x,p}$ space. Our approach is based on their works. In this paper, for the hard potentials (1.5) we prove the global existence, uniqueness and the optimal time decay rate for the solution. Also, our local existence result (Theorem 3.1) is valid in a particular case of hard potentials.

The dynamics of particles whose speed is comparable to the speed of light is described by the relativistic Boltzmann equation:
\begin{equation}
\label{1}
\partial_t F+\hat{p} \cdot \nabla_x F=Q(F,F), \quad F(0,x,p)=F_0 (x,p).
\end{equation}
The solution, $F=F(t,x,p)\geq0,$ is a distribution function for the particles at time $t \in \mathbb{R}_{+},$ position $x \in \mathbb{R}^3,$ and momentum $p \in \mathbb{R}^3 .$ The normalized velocity $\hat{p}$ is defined by 
\begin{equation}
\label{2}
\hat{p}={p \over p^0}={{p} \over {\sqrt{1+|p|^2}}}. \nonumber
\end{equation}
The collision operator, $Q(F,F),$ is defined as
\begin{equation}
\label{3}
Q(F,G)=\int_{\mathbb{R}^3}dq \int_{\mathbb{S}^2}d\omega \,v_{\phi}\sigma(g,\theta)\left[F(p')G(q')-F(p)G(q)\right].
\end{equation}
Here 
\begin{equation}
\label{4}
g=g(p,q)=\sqrt{2(p^0 q^0 -p\cdot q-1)},\quad s=g^2+4,\nonumber
\end{equation}
\begin{equation}
\label{5}
v_{\phi}=v_{\phi}(p,q)=\sqrt{\left|{{p \over p^0}-{q \over q^0}}\right|^2 -\left|{{p \over p^0} \times{q \over q^0}}\right|^2}={{g \sqrt{s}} \over{p^0 q^0}},\nonumber
\end{equation}
\begin{equation}
\label{6}
p'={{p+q} \over 2}+{g \over 2} \left[\omega+\left({{p^0+q^0}\over{\sqrt{s}}}-1\right)(p+q){{(p+q) \cdot \omega} \over{|p+q|^2}}\right],\nonumber
\end{equation}
\begin{equation}
\label{7}
q'={{p+q} \over 2}-{g \over 2} \left[\omega+\left({{p^0+q^0}\over{\sqrt{s}}}-1\right)(p+q){{(p+q) \cdot \omega} \over{|p+q|^2}}\right].\nonumber
\end{equation}
Above $p'$ and $q'$ are the post-collisional momenta for the pre-collisional momenta $p$ and $q,$ respectively, which satisfy
\begin{equation}
\label{8}
\begin{gathered}
p^0 +q^0 =p'^0 +q'^0, \\
         p+q=p'+q'.
\end{gathered}
\end{equation}
The angle $\theta$ satisfies $\cos \theta=k \cdot \omega$ with $k=k(p,q)$ and $|k|=1.$ We assume that the scattering kernel, $\sigma=\sigma(g,\theta),$ satisfies the following hard potential assumption :
\begin{equation}
\label{9}
\begin{gathered}
\left(g \over \sqrt{s} \right) g^a \sigma_0 (\theta) \lesssim \sigma(g,\theta) \lesssim (g^a +g^{-b})\sigma_0 (\theta).
\end{gathered}
\end{equation}
Here $0 \leq \sigma_0(\theta) \lesssim \sin^{\gamma} (\theta),$ $\gamma>-2,$ $0\leq a \leq 2+\gamma$ and $0\leq b<\min\{4,4+\gamma\}.$ Additionally $\sigma_0 (\theta)$ should be non-zero on a set of positive measure. For the definition of the vector $k$ and explanations about the above assumption, we refer to [3] and the references therein.

We define a normalized relativistic Maxwellian by
\begin{equation}
\label{10}
J(p)={1 \over {4 \pi}} e^{-p^0}, \nonumber
\end{equation}
which satisfies (\ref{1}) since $Q(J,J)=0$ by (\ref{8}). As in [1], for solutions to (\ref{1}), we introduce the excess conservations of mass, momentum and energy,
\begin{equation}
\label{11}
\begin{gathered}
\int_{\mathbb{R}^3 \times \mathbb{R}^3}dx\,dp\, \{F(t,x,p)-J(p)\}=\int_{\mathbb{R}^3 \times \mathbb{R}^3}dx\,dp\, \{F_0(x,p)-J(p)\}\equiv M_0, \\
\int_{\mathbb{R}^3\times \mathbb{R}^3}dx\,dp\, \{F(t,x,p)-J(p)\}p=\int_{\mathbb{R}^3 \times \mathbb{R}^3 }dx\,dp\, \{F_0(x,p)-J(p)\}p\equiv J_0,\\
\int_{\mathbb{R}^3 \times \mathbb{R}^3}dx\,dp\, \{F(t,x,p)-J(p)\}p^0=\int_{\mathbb{R}^3\times \mathbb{R}^3}dx\,dp\, \{F_0(x,p)-J(p)\}p^0\equiv E_0, \\
\end{gathered}
\end{equation}
as well as the excess entropy inequality,
\begin{equation}
\label{12}
\begin{gathered}
\hspace{-25mm}\int_{\mathbb{R}^3 \times \mathbb{R}^3}dx\,dp\, \{F(t,x,p)\ln F(t,x,p)-J(p)\ln J(p)\}\\ \hspace{15mm} \leq \int_{\mathbb{R}^3\times \mathbb{R}^3}dx\,dp\, \{F_0(x,p)\ln F_0 (x,p)-J\ln J\}\equiv H_0.
\end{gathered}
\end{equation}

Next we define the notation and the perturbation to the relativistic Maxwellian.\\

\subsection{Notation and perturbation equation}
We define the Lebesgue spaces for scalar functions $g$ by $$\left|\left|g\right|\right|_{L^{r_1}_t L^{r_2}_p L^{r_3}_x}=\left[\int_0^\infty \left\{ \int_{\mathbb{R}^3} \left(\int_{\mathbb{R}^3} \left|g(t,x,p)\right|^{r_3} dx \right)^{r_2 \over r_3} dp \right\}^{{r_1} \over r_2} dt\right]^{1 \over r_1},\quad r_1,\,r_2,\,r_3 \in[1,\infty),$$
with the standard modifications when $r_1,\,r_2$ or $r_3=\infty.$ Similarly for $||\cdot||_{L^{r_2}_p L^{r_3}_x}.$ 

For $l \in \mathbb{R},$ we define a weight function in $p$ by $$w_l (p)=(p^0)^l=\sqrt{1+|p|^2}^{l}.$$ We also define a weight function in $t$ by $\varpi_r (t)=(1+t)^{\sigma_r}$ where 
\begin{equation}
\sigma_r={3 \over 2}\left({1 \over r} -{1 \over 2}\right),\quad 1 \leq r \leq 2.
\end{equation}
Throughout this paper, $C$ denotes some positive (generally large) constant and $c$ denotes some positive (generally small) constant, where both $C$ and $c$ may depend on $\gamma,\,a,\,b,\,l$ and take different values in different places. Furthermore, we use the notation $A \lesssim B$ to mean that $A \leq CB$ for a constant $C>0$ such as above. 

We define the standard perturbation $f(t,x,p)$ to the relativistic Maxwellian $J$ as $$F=J+\sqrt{J}f.$$
The relativistic Boltzmann equation for the perturbation $f$ takes the form
\begin{equation}
\partial_t f +\hat{p} \cdot \nabla_x f +\nu (p) f-K(f)=\Gamma(f,f), \quad f (0,x,p)=f_0 (x,p). 
\end{equation}
Above the multiplication operator is
\begin{equation}
\nu(p)=\int_{\mathbb{R}^3} dq \int_{\mathbb{S}^2}d\omega\,v_{\phi}\,\sigma(g,\theta)J(q). \nonumber
\end{equation}

\noindent The integral operator is defined by
\begin{equation}
\begin{gathered}
K(h)=\int_{\mathbb{R}^3} dq \int_{\mathbb{S}^2}d\omega\,v_{\phi}\,\sigma(g,\theta)\sqrt{J(q)}\left\{\sqrt{J(q')}h(p')+\sqrt{J(p')}h(q')\right\}\\
-\int_{\mathbb{R}^3} dq \int_{\mathbb{S}^2}d\omega\,v_{\phi}\,\sigma(g,\theta)\sqrt{J(q)J(p)}h(q). \nonumber
\end{gathered}
\end{equation}
The nonlinear part of the collision operator is defined as
\begin{equation}
\begin{gathered}
\hspace{-60mm}\Gamma(h_1,h_2)=J^{-{1 \over 2}} Q(\sqrt{J}h_1,\sqrt{J}h_2)\\ \hspace{18mm}=\int_{\mathbb{R}^3} dq \int_{\mathbb{S}^2}d\omega\,v_{\phi}\,\sigma(g,\theta)\sqrt{J(q)}\left[h_1(p')h_2(q')-h_1(p)h_2(q)\right]. \nonumber
\end{gathered}
\end{equation}
It is easily verified that the mild form of (1.9) is given by
\begin{equation}
\begin{gathered}
\hspace{-40mm}f(t,x,p)=e^{-\nu(p)t}f_0 \left(x-\hat{p}t,p\right)\\ \hspace{20mm}+\int_0^t e^{-\nu(p)(t-s)} \,K \left(f\right) \left(s,x-\hat{p}(t-s),p\right)\,ds\\ \hspace{20mm}+\int_0^t e^{-\nu(p)(t-s)}\,\Gamma \left(f,f\right) \left(s,x-\hat{p}(t-s),p\right)\,ds.
\end{gathered}
\end{equation}

Our main results are as follows.
\subsection{Main results}
\begin{thm}Let $l>14+{a \over 2},\,M \geq 1,\,\left|\left|w_l f_0\right|\right|_{L^\infty_{p} \left(L^2_x \cap L^\infty_x \right)}  \leq M< \infty,$ and $F_0 (x,p)=J(p)+\sqrt{J(p)}f_0 (x,p).$ There exist $\epsilon>0$ and $L>0$ depending on $a,\,b,\,\gamma,\,l,\,M$ such that if
\begin{equation}
\sup_{t\geq 0,\,x \in \mathbb{R}^3} \int_{|p| \leq L}\left|f_0 \left(x-\hat{p}t,p\right)\right| \,dp+\left|M_0\right|+\left|E_0\right|+\left|H_0\right| \leq \epsilon,
\end{equation}
then there is a unique global solution (1.9), $f(t,x,p),$ to the relativistic Boltzmann equation (1.8) satisfying
\begin{equation}
\sup_{t \geq 0} \left|\left|w_l f(t)\right|\right|_{L^\infty_{p} \left(L^2_x \cap L^\infty_x \right)}  \leq CM^2. \nonumber
\end{equation}
Moreover, (1.5) and (1.6) hold. The solution $f(t,x,p)$ is continuous if it is so initially. The positivity $F(t,x,p)=J(p)+\sqrt{J(p)} f(t,x,p)\geq0$ also holds.
\end{thm}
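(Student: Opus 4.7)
My plan is to combine the local existence theorem (Theorem~3.1) with a uniform in time a priori bound on $\|w_l f(t)\|_{L^\infty_p(L^2_x\cap L^\infty_x)}$, closed by a standard continuation argument, and then to extract the optimal algebraic decay $(1+t)^{-3/4}$ from the Duhamel representation of the mild form. The whole space geometry together with the passage to the hard potential case requires adapting the Strain--Zhu framework [5] to a multiplier $\nu(p)$ that now grows at infinity rather than decays, and using the excess quantities $M_0,J_0,E_0,H_0$ in place of the zero-mean hypothesis that is available in the torus case treated by Wang [6].

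\textbf{A priori estimate.} Starting from the mild form, I would perform one Duhamel iteration on the $K$-term, producing a double integral whose pointwise kernel is integrable against $e^{-\nu(p)(t-s)}$ thanks to the hard potential lower bound $\nu(p)\gtrsim (p^0)^\beta$ for some $\beta>0$. The $p$-integration is then split according to $|p|\leq L$ or $|p|>L$: on the low momentum region the smallness hypothesis on $\int_{|p|\leq L}|f_0(x-\hat p t,p)|\,dp$ directly controls the contribution of the free streaming term, while on the high momentum region the weight $w_l$ with $l>14+a/2$ absorbs the polynomial loss produced by the gain factor $g^a$ in the scattering kernel. The nonlinear piece is handled by the standard bilinear bound $|w_l\nu^{-1}\Gamma(f,g)|\lesssim \|w_l f\|_{L^\infty_p}\|w_l g\|_{L^\infty_p}$. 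Running this argument simultaneously in $L^2_x$ and $L^\infty_x$ yields a closed bound of the form $Y(t)\lesssim M^2$ through the continuity argument, provided $\epsilon$ is chosen small enough relative to $M$.

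\textbf{Decay rate.} For the algebraic decay I would first establish the linear semigroup bound $\|e^{-tL}f_0\|_{L^2_p L^2_x}\lesssim (1+t)^{-3/4}\bigl(\|f_0\|_{L^2_p L^1_x}+\|f_0\|_{L^2_p L^2_x}\bigr)$ with $L=\nu-K$, by Fourier analysis in $x$ combined with the coercivity of $L$ modulo its five-dimensional kernel; the smallness of $|M_0|+|E_0|+|H_0|$ from (1.5) and (1.6) ensures that the macroscopic part of $f$ is controllably small, so that this coercivity is effective on the solution itself. Inserting the linear decay into the mild form with the time weight $\varpi_r$ and the bilinear estimate above closes the weighted bound $\sup_t \varpi_r(t)\|w_l f(t)\|_{L^\infty_p L^2_x}<\infty$, and the $L^\infty_x$ decay is then obtained by one more iteration of the mild form against the $L^2_x$ decay just secured.

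\textbf{Main obstacle.} I expect the hardest point to be the simultaneous bookkeeping of the momentum weight $w_l$ and the time weight $\varpi_r$ inside the bilinear $\Gamma(f,f)$: one must distribute the weight between the two factors so as to survive the hard potential gain $g^a$ with $a\leq 2+\gamma$, while integrating the resulting time factors without losing more decay than $\varpi_r$ provides. This balance is exactly what dictates the explicit threshold $l>14+a/2$ appearing in the statement. Uniqueness, continuity, and the positivity $F=J+\sqrt{J}f\geq 0$ then follow from the a priori bound by a standard contraction argument and by the monotone iteration of the mild form.
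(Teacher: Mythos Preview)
Your proposal has a genuine gap at the closure of the a priori bound, and it also conflates Theorem~1.1 with Theorem~1.3: the decay rate is \emph{not} part of the statement being proved here, so the entire ``Decay rate'' paragraph is extraneous.

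The essential problem is the nonlinear estimate. You propose to use the standard bilinear bound $|w_l\nu^{-1}\Gamma(f,g)|\lesssim \|w_lf\|_{L^\infty_p}\|w_lg\|_{L^\infty_p}$. In a continuation argument with the bootstrap hypothesis $Y(t)=\sup_{0\le s\le t}\|w_lf(s)\|_{L^\infty_p(L^2_x\cap L^\infty_x)}\le 2C^\star M^2$, this bound feeds back a contribution of order $Y(t)^2\sim M^4$ into the Duhamel formula, which cannot be absorbed into $C^\star M^2$ unless $M$ itself is small. But the whole point of the theorem is that $M\ge 1$ is \emph{large}; only $\epsilon$ is small, and your bilinear bound contains no factor of $\epsilon$ at all. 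So the loop does not close.

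The paper closes the loop by a different nonlinear estimate (Lemma~2.6), which for some $d\in(1,9/8)$ gives
\[
w_l(p)\,\|\Gamma(f,f)\|_{L^\infty_x}(s,p)\;\lesssim\;\nu(p)\,\|w_lf(s)\|_{L^\infty_{x,p}}^{\frac{9d+1}{5d}}\;\sup_{y}\Bigl(\int_{\mathbb{R}^3}|f(s,y,q)|\,dq\Bigr)^{\frac{d-1}{5d}},
\]
together with its $L^2_x$ analogue. The exponent on $\|w_lf\|$ is strictly less than $2$, and the residual factor $\bigl(\int|f|\,dq\bigr)^{(d-1)/5d}$ is shown to be small uniformly in time (Lemma~4.3), using precisely the smallness of $\sup_{t,x}\int_{|p|\le L}|f_0(x-\hat p t,p)|\,dp$. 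That is the actual role of this hypothesis; it does \emph{not} control the free-streaming term, which is simply bounded by $\|h_0\|\le M$.

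Likewise, the smallness of $|M_0|+|E_0|+|H_0|$ enters the existence proof not through spectral coercivity of $L=\nu-K$, but through the entropy inequality (Lemma~2.5), which bounds
\[
\int\!\!\!\int |f|^2\chi_{\{|f|\le\sqrt J\}}\,dx\,dp+\int\!\!\!\int \sqrt J\,|f|\,\chi_{\{|f|\ge\sqrt J\}}\,dx\,dp\;\lesssim\;|M_0|+|E_0|+|H_0|.
\]
This is what controls the twice-iterated $K$ term on the compact momentum region (Case~3 in the proof of Lemma~4.2) and also contributes the small $\mathcal E_0$ factors in Lemma~4.3. Without this mechanism (refined bilinear estimate plus control of $\int|f|\,dq$ via transport and entropy), the large-amplitude closure in the paper's Lemma~4.2 and the final bootstrap in the proof of Theorem~1.1 cannot be reproduced.
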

\begin{rem}For the case $a \in [0,2] \cap [0,2+\gamma)$ the smallness of 
\begin{equation}
\sup_{0 \leq t \leq T^\star,\,x \in \mathbb{R}^3} \int_{|p| \leq L}\left|f_0 \left(x-\hat{p}t,p\right)\right| \,d p
\end{equation}
 is not needed, where $[0,T^\star]$ is the time interval of the local solutions (constructed in Theorem 1.5 or Theorem 3.1 below). The result extends Theorem 1.6 below.

For the existence, we can remove the assumption on the $L^\infty_p L^2_x$ norm, but the norm will be needed in the proof of Theorem 1.3 below. We can also prove Theorem 1.1 for the periodic domain.
\end{rem}
\begin{thm}
Choose $r \in [1,6/5)$ where $\sigma_r$ is given by (1.9). Let $f(t,x,p)$ be the solution constructed in Theorem 1.1. For any fixed $r \in [1,6/5),$ if $\left|\left| f_0\right|\right|_{L^2_{p}L^r_x} <\infty$. Then 
\begin{equation}
\begin{gathered}
\left|\left|w_l f(t)\right|\right|_{L^\infty_{p} \left(L^2_x \cap L^\infty_x \right)}  \leq C (1+t)^{-\sigma_r} \left\{\left|\left|w_l f_0\right|\right|_{L^\infty_{p} \left(L^2_x \cap L^\infty_x \right)} +\left|\left| f_0\right|\right|_{L^2_{p}L^r_x} \right\}. \nonumber
\end{gathered}
\end{equation}
\end{thm}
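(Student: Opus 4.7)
The plan is to combine an optimal decay estimate for the linearized semigroup with the a priori bound from Theorem~1.1, and to close the decay in $L^\infty_p (L^2_x \cap L^\infty_x)$ via a time-weighted bootstrap on the Duhamel representation. First I would establish the linear decay. Let $\mathcal{U}(t)$ denote the semigroup generated by $-\hat{p}\cdot \nabla_x - \nu + K$. Using Fourier transform in $x$, spectral analysis near the zero frequency (reflecting the five-dimensional kernel of $L=\nu-K$ coming from the macroscopic conservation laws), and a standard micro--macro decomposition, one obtains
\begin{equation*}
\|\mathcal{U}(t) h\|_{L^2_{x,p}} \lesssim (1+t)^{-\sigma_r}\bigl\{ \|h\|_{L^2_{x,p}} + \|h\|_{L^2_p L^r_x} \bigr\}, \qquad r\in[1,6/5).
\end{equation*}
In the hard-potential case (1.5), $\nu(p)$ is bounded below by a positive constant so the microscopic component carries uniform exponential decay, while the macroscopic component behaves like a perturbation of the heat semigroup, giving the optimal $L^r_x \to L^2_x$ rate with $\sigma_r=\tfrac{3}{2}(\tfrac{1}{r}-\tfrac{1}{2})$.

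Next I would set up the weighted norm
\begin{equation*}
\mathcal{D}(t) = \sup_{0 \leq s \leq t} \varpi_r(s)\,\|w_l f(s)\|_{L^\infty_p (L^2_x \cap L^\infty_x)}
\end{equation*}
and aim at $\mathcal{D}(t) \lesssim \|w_l f_0\|_{L^\infty_p(L^2_x \cap L^\infty_x)} + \|f_0\|_{L^2_p L^r_x}$. Writing the solution through the linearized semigroup,
\begin{equation*}
f(t) = \mathcal{U}(t) f_0 + \int_0^t \mathcal{U}(t-s)\,\Gamma(f,f)(s)\,ds,
\end{equation*}
the first term is controlled by Step~1. For the nonlinear integral I split at $t/2$ and bound $\Gamma(f,f)(s)$ in both $L^2_{x,p}$ and $L^2_p L^r_x$ by $\|w_l f(s)\|_{L^\infty_p L^\infty_x}\|f(s)\|_{L^2_{x,p}}$. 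The first factor is bounded by Theorem~1.1 (gaining an extra $\varpi_r(s)^{-1}$ once $\mathcal{D}$ is invoked), while the second is replaced by the same decaying quantity we are proving, producing a closed bootstrap that yields the $L^2_{x,p}$ decay of $f$ at rate $(1+t)^{-\sigma_r}$, once the smallness of $\epsilon$ (and $M$) from Theorem~1.1 is used.

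To upgrade from $L^2_{x,p}$ to $L^\infty_p (L^2_x \cap L^\infty_x)$, I would insert the just-obtained $L^2$ decay back into the original mild form (1.12) and iterate once. The $K$-term is decomposed into a low-momentum piece $|q|\leq L$, controlled pointwise in $x$ (after characteristic integration) by $\|f(s)\|_{L^2_p L^2_x}$ through Cauchy--Schwarz, and a high-momentum tail absorbed by the weight $w_l$ with $l > 14 + a/2$. The $\Gamma$-term is treated analogously, using $\mathcal{D}(t)$ bilinearly and the same splitting of the time integral at $t/2$. The uniform lower bound $\nu(p)\geq \nu_0 > 0$ characteristic of the hard-potential case makes $e^{-\nu(p)(t-s)}$ furnish enough damping to close all momentum integrations simultaneously in $L^2_x$ and $L^\infty_x$.

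The main obstacle I expect is the coupling of the $L^2_x$ and $L^\infty_x$ estimates under the single time weight $\varpi_r(t)$: the kernel representations of $K$ and $\Gamma$ must simultaneously admit an $L^2_x$-bound (producing the decay rate) and a pointwise-in-$x$ bound (preserving the $L^\infty_x$ component of the norm), and the weight index $l>14+a/2$ must be large enough to dominate the momentum growth $g^a$ of the hard-potential collision kernel while still yielding a small factor from the tail $|p|>L$ so that the iteration in the $L^\infty_x$ space closes without loss of the $(1+t)^{-\sigma_r}$ rate.
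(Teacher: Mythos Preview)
Your linear decay step and the Duhamel-plus-bootstrap outline are reasonable, but the nonlinear closure has a genuine gap. You bound $\Gamma(f,f)(s)$ by $\|w_l f(s)\|_{L^\infty_{x,p}}\|f(s)\|_{L^2_{x,p}}$ and then propose to close ``once the smallness of $\epsilon$ (and $M$) from Theorem~1.1 is used.'' But in Theorem~1.1 one has $M\ge 1$ and $M$ is \emph{not} assumed small; only $\epsilon$ is small, and $\epsilon$ governs $|M_0|+|E_0|+|H_0|$ together with the transport integral (1.11), not the $L^\infty$ amplitude of $f$. With your bilinear estimate the bootstrap reads $\mathcal{D}(t)\lesssim D_0 + C\,\mathcal{D}(t)^2$ (or, in the $L^2$ step, $\mathcal{D}_2(t)\lesssim D_0 + CM^2\,\mathcal{D}_2(t)$ if you use only the uniform bound from Theorem~1.1), where $D_0$ already contains $\|w_l f_0\|_{L^\infty_p(L^2_x\cap L^\infty_x)}\le M$. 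Since neither $M$ nor $D_0$ is small, no continuity argument can absorb the right-hand side.

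The paper extracts the needed smallness from a different source. Instead of the crude bilinear bound it uses Lemma~2.6,
\[
w_l(p)\,\|\Gamma(f,f)\|_{L^1_x\cap L^\infty_x}(s,p)\ \lesssim\ \nu(p)\,\|w_l f(s)\|_{L^\infty_p(L^2_x\cap L^\infty_x)}^{\frac{9d+1}{5d}}\Bigl(\sup_{y}\int_{\mathbb{R}^3}|f(s,y,q)|\,dq\Bigr)^{\frac{d-1}{5d}},
\]
and then shows, via Lemma~2.5 (the excess entropy inequality), Lemma~4.3, and (4.12)--(4.13), that the $L^1_p$ factor on the right is controlled by $|M_0|+|E_0|+|H_0|$ and (1.11), hence by $\epsilon$; this is precisely (5.8). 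That small factor $\eta$---not $M$---is what makes the coefficient $C_M\eta$ in (5.9) absorbable on the left. In the large-amplitude setting the smallness needed to close the decay must come from the entropy/conservation constraints through the $\int|f|\,dp$ factor, not from any smallness of the $L^\infty$ norm; your scheme does not invoke this mechanism and therefore cannot close under the hypotheses of Theorem~1.1.
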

\begin{rem}When $r=1$, the decay rate is $(1+t)^{-{3 \over 4}}$ and this is optimal (see [7]).
\end{rem}
\subsection{Previous results}
In this subsection, we consider both $\mathbb{T}^3_x$ and $\mathbb{R}^3_x$ case. Let $x \in \Omega=\mathbb{R}^3$ or $\mathbb{T}^3$. We recall some of the results in \cite{r6}.
\begin{thm}[Local existence]\cite{r6}. We assume that $-2<\gamma,\,a\in [0,2] \cap [0,2+\gamma),\,b \in [0,\min\{4,4+\gamma\}).$ Let $\beta>14,\,F_0 (x,p)=J+\sqrt{J} f_0 (x,p) \geq 0$ and $\left|\left|w_\beta f_0 \right|\right|_{L^\infty_{x,p}} < \infty,$ then there exists a positive time 
\begin{equation}
T^\star=\left[ \widetilde{C} \left(1+\left|\left|w_\beta f_0\right|\right|_{L^\infty_{x,p}} \right) \right]^{-1}>0, \nonumber
\end{equation}
such that the relativistic Boltzmann equation (1.1) has a unique mild solution $F(t,x,p)=J(p)+\sqrt{J(p)}f(t,x,p) \geq 0$ on the time interval $t \in [0, T^\star]$ and satisfies 
\begin{equation}
\left|\left|w_\beta f (t) \right|\right|_{L^\infty_{x,p}} \leq 2 \left|\left|w_\beta f_0\right|\right|_{L^\infty_{x,p}},\quad \mbox{for } 0 \leq t \leq T^\star, \nonumber
\end{equation}
where the positive constant $\widetilde{C} \geq 1$ depends only on $a,\,b,\,\gamma,\,\beta.$ In addition, the conservations of mass, momentum, and energy (\ref{11}) as well as the additional entropy inequality (\ref{12}) hold. Furthermore, if the initial data $f_0$ is continuous, then the solution $f(t,x,p)$ is continuous in $[0,T^\star] \times \Omega \times \mathbb{R}^3.$
\end{thm}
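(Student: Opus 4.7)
The approach is a Picard iteration on the mild formulation (1.11) in the weighted space $L^\infty([0,T^\star];L^\infty_{x,p}(w_\beta))$. Setting $f^0\equiv 0$ and
\begin{equation*}
f^{n+1}(t,x,p)=e^{-\nu(p)t}f_0(x-\hat p t,p)+\int_0^t e^{-\nu(p)(t-s)}\bigl[K(f^n)+\Gamma(f^n,f^n)\bigr]\bigl(s,x-\hat p(t-s),p\bigr)\,ds,
\end{equation*}
the targets are a uniform bound $\|w_\beta f^n(t)\|_{L^\infty_{x,p}}\leq 2\|w_\beta f_0\|_{L^\infty_{x,p}}$ for $t\in[0,T^\star]$, a Cauchy property in the same norm, and the identification of the limit as a mild solution.

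The technical core consists of the pointwise estimates
\begin{equation*}
|w_\beta(p)K(f)(p)|\lesssim \nu(p)\,\|w_\beta f\|_{L^\infty_p},\qquad |w_\beta(p)\Gamma(f_1,f_2)(p)|\lesssim \nu(p)\,\|w_\beta f_1\|_{L^\infty_p}\|w_\beta f_2\|_{L^\infty_p},
\end{equation*}
valid under the present parameter range and for $\beta>14$. Since $\int_0^t\nu(p)e^{-\nu(p)(t-s)}\,ds\leq \nu(p)t$, substituting these into the iteration gives
\begin{equation*}
\|w_\beta f^{n+1}(t)\|_{L^\infty_{x,p}}\leq \|w_\beta f_0\|_{L^\infty_{x,p}}+Ct\bigl(\|w_\beta f^n(t)\|_{L^\infty_{x,p}}+\|w_\beta f^n(t)\|_{L^\infty_{x,p}}^2\bigr),
\end{equation*}
and the choice $T^\star=[\widetilde C(1+\|w_\beta f_0\|_{L^\infty_{x,p}})]^{-1}$ with $\widetilde C$ sufficiently large propagates the desired bound by induction. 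Linearity of $K$ and bilinearity of $\Gamma$ produce an analogous contraction estimate for $f^{n+1}-f^n$, so $\{f^n\}$ is Cauchy and its limit $f$ is the unique mild solution in this class.

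To obtain $F=J+\sqrt J f\geq 0$, the plan is to run in parallel the standard nonnegative iteration
\begin{equation*}
\partial_t F^{n+1}+\hat p\cdot\nabla_x F^{n+1}+R(F^n)F^{n+1}=Q^{+}(F^n,F^n),\qquad F^{n+1}(0,x,p)=F_0(x,p),
\end{equation*}
with $R(F^n)(p)=\int_{\mathbb{R}^3}\int_{\mathbb{S}^2}v_\phi\,\sigma(g,\theta)F^n(q)\,dq\,d\omega$; provided $F_0\geq 0$, this preserves $F^{n+1}\geq 0$ inductively, and by uniqueness the limit coincides with $J+\sqrt J f$. The excess conservations (1.7) and entropy inequality (1.8) are obtained by multiplying (1.1) against $1,\,p,\,p^0$, respectively $\ln F$, integrating in $(x,p)$, and invoking the $(p,q)\leftrightarrow(p',q')$ and $(p,q)\leftrightarrow(q,p)$ symmetries of the collision integral together with (1.4). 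Continuity is propagated along the iteration and survives the uniform convergence on $[0,T^\star]\times\Omega\times\mathbb{R}^3$.

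The main obstacle is the pointwise bound on $K$ and $\Gamma$: the upper bound in (1.5) contains the singular factor $g^{-b}$ with $b$ allowed up to just below $\min\{4,4+\gamma\}$, so after a Carleman-type change of variables the resulting integrals must be controlled by exploiting the Gaussian decay of $\sqrt{J(q)J(q')}$ together with the conservation $p^0+q^0=p'^0+q'^0$, which is what allows the polynomial weight $w_\beta$ with $\beta>14$ to be absorbed on the post-collisional momenta.
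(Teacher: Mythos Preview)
There is a genuine gap in your iteration argument. From the pointwise bounds you state, $|w_\beta\Gamma(f,f)(p)|\lesssim\nu(p)\|w_\beta f\|_{L^\infty_p}^2$, the time integral gives
\[
\int_0^t e^{-\nu(p)(t-s)}\,|w_\beta\Gamma(f^n,f^n)|\,ds\ \lesssim\ \|w_\beta f^n\|_{L^\infty}^2\int_0^t \nu(p)e^{-\nu(p)(t-s)}\,ds\ =\ \|w_\beta f^n\|_{L^\infty}^2\bigl(1-e^{-\nu(p)t}\bigr).
\]
For hard potentials $\nu(p)\approx(p^0)^{a/2}$ is unbounded, so $1-e^{-\nu(p)t}$ does \emph{not} tend to $0$ uniformly in $p$ as $t\to 0$; it is $\approx 1$ for $|p|$ large. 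Your displayed inequality ``$\int_0^t\nu e^{-\nu(t-s)}ds\leq\nu(p)t$'' is true but produces $C\nu(p)t$, not $Ct$, on the right, and taking $\sup_p$ destroys the smallness. Hence the bound $\|w_\beta f^{n+1}\|\leq\|w_\beta f_0\|+Ct(\cdots)$ does not follow, and the induction $\|w_\beta f^n\|\leq 2\|w_\beta f_0\|$ cannot be closed for large data. The culprit is precisely $\Gamma_{loss}(f,f)(p)=f(p)\int v_\phi\sigma\sqrt{J(q)}f(q)$, whose size genuinely scales like $\nu(p)$; no weight on $f$ removes this growth.

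The paper (and Wang~[6], to which Theorem~1.5 is attributed) avoids this by using, as the \emph{primary} iteration, exactly the scheme you reserve for positivity: one writes $F^{n+1}$ with the full loss $B^n(t,x,p)=\int v_\phi\sigma F^n(q)$ in the exponential and only $Kf^n+\Gamma_{gain}(f^n,f^n)$ as source (see the proof of Theorem~3.1, equation~(3.2)). Since $B^n\geq 0$ one has $e^{-\int B^n}\leq 1$, and the key gain estimate (3.7)--(3.8) yields $w_l|\Gamma_{gain}|\lesssim A(p)\|w_l f^n\|^2$ with $A(p)=(p^0)^{-1+a/2}$ bounded when $a\in[0,2]$; the time integral then produces an honest factor $T^\star$. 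This simultaneously gives boundedness, contraction, and positivity. Your two-track plan (one iteration for bounds, a different one for positivity, reconciled by uniqueness) therefore needs to be collapsed into the single gain/loss-split iteration; with the full $\Gamma$ on the right the contraction step fails for hard potentials with large data.
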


\begin{thm}[Global existence]\cite{r6}. In addition to the above assumptions of Theorem 1.5, we restrict $\gamma>-{4 \over 3},\,a \in [0,2] \cap [0,\min\{2+\gamma,4+3\gamma \}),\,b \in [0,2).$ If $\left|\left|w_\beta f_0 \right|\right|_{L^\infty_{x,p}} \leq \overline{M},$ there is a small constant $\epsilon_0>0$ depending on $a,\,b,\,\gamma,\, \beta,\,\overline{M}$ such that if 
\begin{equation}
\left|M_0\right|+\left|E_0\right|+\left|H_0\right|+\left|\left|f_0 \right|\right|_{L^1_x L^\infty_p} \leq \epsilon_0,
\end{equation}
the relativistic Boltzmann equation (1.1) has a global unique mild solution $F(t,x,p)=J(p)+\sqrt{J(p)} f(t,x,p)\geq0 $ satisfying (\ref{11}), (\ref{12}) and
\begin{equation}
\sup_{t \geq 0} \left|\left|w_\beta f(t) \right|\right|_{L^\infty_{x,p}} \leq \widetilde{C}_1\overline{M}^2, \nonumber
\end{equation}
where the positive constant $\widetilde{C}_1$ depends only on $a,\,b,\,\gamma,\,\beta.$ Moreover, if the initial data $f_0$ is continuous, then the solution $f(t,x,p)$ is continuous in $[0,\infty) \times \Omega \times \mathbb{R}^3.$
\label{th2}
\end{thm}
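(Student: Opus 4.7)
\medskip
\noindent\textbf{Proof proposal for Theorem 1.6.} The plan is to combine the local existence from Theorem 1.5 with a continuation argument driven by an a priori bound on the weighted $L^\infty_{x,p}$ norm. Let $T^\sharp$ be the maximal existence time given by iterating Theorem 1.5 and set $\mathcal{M}(T)=\sup_{0\le t\le T}\|w_\beta f(t)\|_{L^\infty_{x,p}}$. The aim is to show that whenever $\mathcal{M}(T)\le 2\widetilde{C}_1\overline M^2$ holds on $[0,T]$, one in fact has $\mathcal{M}(T)\le \widetilde{C}_1\overline M^2$; this closes a standard bootstrap and, by the uniform lower bound on the local existence time from Theorem 1.5, forces $T^\sharp=\infty$ and yields the claimed global bound.

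The main estimate is derived from the mild form for $f$. The zeroth-order contribution $e^{-\nu(p)t}f_0(x-\hat p\,t,p)$ is directly controlled by $\overline M$. For the $K$-term I would use the standard pointwise kernel representation $|w_\beta K(h)(p)|\lesssim \int k_w(p,q)|w_\beta h(q)|\,dq$ with $k_w$ exhibiting Gaussian-type tails in $|p-q|$, and then substitute the mild form into $h$ once more (a double Duhamel iteration). A change of variables from the inner momentum to a spatial variable along the backward characteristics then converts the triple integral into one involving spatial $L^1_x$ norms of $f$; the Jacobian is bounded below away from a small exceptional set, and the hard-potential coercivity $\nu(p)\gtrsim (p^0)^{a/2}$ damps the high-momentum contribution. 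The spatial smallness required is not available from the initial datum directly; instead one propagates it from the excess conservations of mass, momentum and energy together with the excess entropy inequality via a standard relative-entropy argument, which furnishes a uniform smallness of $\|f(t)\|_{L^2_{x,p}}$ of order $\sqrt{\epsilon_0}$, and then interpolates against the weighted $L^\infty$ bound to produce the small coefficient multiplying $\mathcal{M}(T)$. For the nonlinear piece, the standard pointwise bound $|w_\beta\Gamma(f,f)|\lesssim \nu(p)\|w_\beta f\|_{L^\infty_{x,p}}^2$ combined with $\int_0^t\nu(p)e^{-\nu(p)(t-s)}\,ds\le 1$ gives a contribution $\lesssim \mathcal{M}(T)^2$.

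Assembling the three pieces yields an inequality of the shape $\mathcal{M}(T)\le C\overline M + C\sqrt{\epsilon_0}\,\mathcal{M}(T) + C\mathcal{M}(T)^2$; choosing $\epsilon_0$ sufficiently small depending on $\overline M$ and combining with the bootstrap hypothesis closes to $\mathcal{M}(T)\le \widetilde{C}_1\overline M^2$. Positivity of $F$, continuity, and the conservation/entropy statements are inherited from Theorem 1.5 by uniqueness and a standard extension. I expect the hardest step to be the double-Duhamel change-of-variables estimate when $b$ is close to its upper limit: the soft singularity $g^{-b}$, the angular factor $\sigma_0(\theta)\lesssim \sin^\gamma\theta$, and the relativistic Jacobian must all be handled simultaneously while keeping $k_w$ integrable in $q$ uniformly in $p$. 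It is precisely this delicate balance that forces the technical restrictions $\gamma>-4/3$, $b<2$ and the high weight $\beta>14$ in the hypotheses.
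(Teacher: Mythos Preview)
Your bootstrap scheme and the double-Duhamel treatment of the $K$-term are the right framework, but there is a genuine gap in your handling of the nonlinear term $\Gamma(f,f)$, and it is precisely the place where the large-amplitude feature of the theorem lives.

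You estimate $|w_\beta\Gamma(f,f)|\lesssim \nu(p)\,\mathcal M(T)^2$ and arrive at
\[
\mathcal M(T)\le C\overline M + C\sqrt{\epsilon_0}\,\mathcal M(T)+C\,\mathcal M(T)^2.
\]
Under the bootstrap hypothesis $\mathcal M(T)\le 2\widetilde C_1\overline M^2$ the quadratic term becomes $4C\widetilde C_1^2\overline M^4$, which cannot be absorbed into $\widetilde C_1\overline M^2$ unless $\overline M$ itself is small. Since the whole point of Theorem~1.6 is that only $\epsilon_0$ is small while $\overline M$ may be large, this closure fails.

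The device that makes the argument work (Wang [6], recalled here as Lemma~2.6 and used in the proof of the paper's own Theorem~1.1 via Lemmas~4.2--4.3) is a sharper pointwise bound: for some $d>1$,
\[
w_\beta(p)\,\|\Gamma(f,f)\|_{L^\infty_x}(s,p)\ \lesssim\ \nu(p)\,\|w_\beta f(s)\|_{L^\infty_{x,p}}^{\frac{9d+1}{5d}}\ \sup_{y}\Bigl(\int_{\mathbb R^3}|f(s,y,q)|\,dq\Bigr)^{\frac{d-1}{5d}}.
\]
The exponent $\tfrac{9d+1}{5d}$ is strictly below $2$, and the residual factor $\bigl(\int|f|\,dq\bigr)^{(d-1)/(5d)}$ is then shown to be \emph{small}: by Lemma~4.3 it is controlled by the dispersive quantity $\int e^{-\nu(p)t}|f_0(x-\hat p t,p)|\,dp$ (bounded in turn by $\|f_0\|_{L^1_xL^\infty_p}$, cf.\ Remark~1.7) plus terms carrying $\mathcal E_0$ or an arbitrarily small $\eta$. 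This is what produces a small coefficient in front of the highest power of $\mathcal M(T)$ and allows the bootstrap to close for arbitrary $\overline M$. Your ``standard pointwise bound'' discards exactly this smallness.

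A secondary inaccuracy: the entropy inequality (Lemma~2.5) does not yield smallness of $\|f(t)\|_{L^2_{x,p}}$ directly; it controls the mixed quantity in (2.3). In Case~3 of the double Duhamel (see (4.11)) one splits $|f|^2=|f|^2\chi_{\{|f|\le\sqrt J\}}+|f|\cdot|f|\chi_{\{|f|\ge\sqrt J\}}$ and bounds the second piece by $\|h\|_{L^\infty_{x,p}}\int\sqrt J\,|f|\,\chi_{\{|f|\ge\sqrt J\}}$; this is why the resulting small contribution comes with the factor $\mathcal E_0\bigl(1+\mathcal M(T)^{1/2}\bigr)$ rather than $\sqrt{\epsilon_0}\,\mathcal M(T)$.
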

\begin{rem}
In (1.12) the norm $\left|\left|f_0\right|\right|_{L^1_x L^\infty_p}$ can be replaced by 
\begin{equation}
\sup_{t \geq T^\star,\,x \in \mathbb{R}^3}\int_{\mathbb{R}^3} e^{-\nu(p)t} \left|f_0 \left(x-\hat{p} t,p\right) \right|\, dp.
\end{equation}
\end{rem}
\begin{thm}[Time decay]\cite{r6}. Let $\Omega= \mathbb{T}^3.$ Assume that $[M_0,J_0,E_0]=[0,0,0],$ and $\epsilon_0>0$ sufficiently small, then there exists a positive constant $\lambda_0 >0$ such that the solution $f(t,x,p)$ obtained in Theorem \ref{th2} satisfies 
\begin{equation}
\left|\left|w_\beta f(t) \right|\right|_{L^\infty_{x,p}} \leq \widetilde{C}_2 e^{-\lambda_0 t},\nonumber
\end{equation}
where $\widetilde{C}_2>0$ is a positive constant depending only $a,\,b,\,\gamma,\,\beta$ and $\overline{M}.$
\end{thm}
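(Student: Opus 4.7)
The plan is to combine an exponential $L^2_{x,p}$ decay of $f$ coming from the coercivity of $L$ and the cancellation of the hydrodynamic projection (forced by $[M_0,J_0,E_0]=[0,0,0]$), and then to upgrade this decay to the weighted $L^\infty_{x,p}$ norm via a double iteration of the mild form (1.10).

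\emph{Step 1 (spectral gap).} Let $P$ be the $L^2_p$ orthogonal projection onto the null space $\mathrm{span}\{\sqrt{J},\,p\sqrt{J},\,p^0\sqrt{J}\}$ of $L=\nu-K$. The hard-potential lower bound in (1.5), together with a standard Fredholm/compactness argument for $K$, gives
\begin{equation}
\langle L h, h\rangle_{L^2_p} \geq c_0\,\|(I-P)h\|_{L^2_\nu}^2, \nonumber
\end{equation}
for every $h$ and some $c_0>0$. The conservation laws (1.7), together with $[M_0,J_0,E_0]=[0,0,0]$, force the moments $\int_{\mathbb{T}^3} Pf(t,x,\cdot)\,dx$ to vanish for all $t\geq 0$, so Poincaré's inequality on $\mathbb{T}^3$ applies to $Pf$.

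\emph{Step 2 (weighted $L^2$ decay).} Testing (1.9) against $f$ in $L^2_{x,p}$ and using Step 1 produces dissipation on $(I-P)f$. To recover dissipation on $Pf$, I would adapt Guo's macroscopic/hydrodynamic estimate: writing $Pf=\{a+b\cdot p+c\,p^0\}\sqrt{J}$, extracting moment equations from (1.9) for $(a,b,c)(t,x)$, and applying elliptic/Poincaré estimates on $\mathbb{T}^3$, one bounds $\|Pf\|_{L^2_{x,p}}$ by $\|(I-P)f\|_{L^2_{x,p}}$ plus a nonlinear remainder handled by the smallness in (1.12) and the $L^\infty$ bound from Theorem 1.6. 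Combined with $\nu(p)\geq \nu_0>0$ in the hard-potential regime, this yields
\begin{equation}
\frac{d}{dt}\|f\|_{L^2_{x,p}}^2 + c_1\|f\|_{L^2_{x,p}}^2 \leq 0, \nonumber
\end{equation}
whence $\|f(t)\|_{L^2_{x,p}} \lesssim e^{-\lambda_1 t}$.

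\emph{Step 3 (upgrade to $L^\infty_{x,p}$).} Applying the weight $w_\beta$ to (1.10) and iterating the mild form once more, the $K$ contribution becomes a double integral against a kernel $\widetilde{k}_\beta(p,p')$. Weighted Grad-type estimates bound this contribution by the sum of (i) the initial data damped by $e^{-\nu_0 t}$, (ii) an $L^2_{x,p}$ norm of $f$ at an intermediate time, and (iii) a nonlinear $\Gamma$ piece handled by the uniform $L^\infty$ bound of Theorem 1.6. Inserting the decay from Step 2 and iterating on fixed-length time intervals transfers the exponential rate from $L^2$ to $\|w_\beta f(t)\|_{L^\infty_{x,p}}$ at some $\lambda_0\leq \lambda_1$. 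The main obstacle is Step 3: under the hard-potential scattering (1.5), the kernel $\widetilde{k}_\beta$ is a relativistic analogue of Grad's kernel, and its $L^1_{p'}$ bounds require delicate control of the angular singularity $\sigma_0(\theta)$ and of the low-relative-speed factor $g^{-b}$. This is exactly why the narrower ranges $\gamma>-4/3$, $a\in[0,2]\cap[0,\min\{2+\gamma,4+3\gamma\})$, $b\in[0,2)$ are imposed in Theorem 1.6, and where most of the technical effort in [6] lies.
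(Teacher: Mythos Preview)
The paper does not actually prove this statement: Theorem 1.8 is listed in the ``Previous results'' subsection and is quoted from \cite{r6} without proof, so there is no in-paper argument to compare your proposal against. Any comparison has to be with the original proof in \cite{r6}.

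That said, your outline has a genuine gap in Step~2. The distinctive feature of Theorems~1.6 and~1.8 is that the perturbation is \emph{large} in $L^\infty_{x,p}$ (only bounded by $\overline{M}$); the small parameter $\epsilon_0$ controls $|M_0|+|E_0|+|H_0|+\|f_0\|_{L^1_xL^\infty_p}$, not $\|w_\beta f\|_{L^\infty_{x,p}}$. In the energy identity
\[
\tfrac12\tfrac{d}{dt}\|f\|_{L^2_{x,p}}^2+\langle Lf,f\rangle=\langle \Gamma(f,f),f\rangle,
\]
the standard trilinear bound gives $|\langle \Gamma(f,f),f\rangle|\lesssim \|w_\beta f\|_{L^\infty_{x,p}}\,\|f\|_{L^2_\nu}^2$, and the prefactor is of size $\widetilde{C}_1\overline{M}^2$, not small. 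You therefore cannot absorb the nonlinear term into the dissipation, and the inequality $\tfrac{d}{dt}\|f\|^2+c_1\|f\|^2\le 0$ does not follow from what you wrote. The phrase ``handled by the smallness in (1.12) and the $L^\infty$ bound from Theorem~1.6'' hides precisely the difficulty: those two ingredients pull in opposite directions here.

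The mechanism actually used in \cite{r6} (and mirrored in this paper for the $\mathbb{R}^3$ case) is different: one never runs a naive $L^2$ energy argument on the nonlinear equation. Instead, the nonlinear term is controlled via estimates of the type in Lemma~2.6, where the small factor comes from $\bigl(\sup_y\int|f(s,y,q)|\,dq\bigr)^{(d-1)/5d}$, and this quantity is shown to be small for all times through the entropy--entropy production bound (Lemma~2.5) combined with Lemma~4.3. The $L^2$ decay input in Step~3 then comes from the \emph{linear} semigroup on $\mathbb{T}^3$ (which has a spectral gap), with the nonlinear piece treated as a forcing term carrying its own smallness from the entropy quantities rather than from $\|f\|_{L^\infty}$. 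Your Step~3 is on the right track, but it needs this alternative smallness mechanism in place of the Step~2 you propose.
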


We also refer to [3,5,7] for recent works about the small initial perturbation. These results use an energy method for solutions to the linearized relativistic Boltzmann equation. For a historical discussion, see [5].

\subsection{Outline of the paper} The rest of this article is organized as follows. First, in Section 2, we recall some basic estimates from [5] and [6]. In Section 3, we construct the local solution. The global existence (Theorem 1.1) is obtained in Section 4. Lastly, in Section 5 we briefly explain the linear $L^\infty_{x,p}$ time decay for solutions to the linearized equation. By using the results, we prove the non-linear time decay rates (Theorem 1.3).
\section{Basic estimates}
\noindent As in \cite{r5}, given a small $\epsilon>0,$ we choose a smooth cut-off function $\chi=\chi(g)$ satisfying
\begin{equation*}
 \chi(g)=\begin{cases}
      1& \text{if $g \geq 2\epsilon$}, \\
      0& \text{if $g \leq \epsilon$}.
    \end{cases} 
\end{equation*}  
We split the integral operator $K$ into
$$K=K^{\chi}+K^{1-\chi},$$
where
\begin{equation}
\begin{gathered}
K^{1-\chi}(h)(p)=\int_{\mathbb{R}^3} d q \int_{\mathbb{S}^2}d\omega\,[1-\chi(g)]\,v_{\phi}\,\sigma(g,\theta)\sqrt{J(q)}\sqrt{J(q')}h(p')\\+\int_{\mathbb{R}^3} d q \int_{\mathbb{S}^2}d\omega\,[1-\chi(g)]\,v_{\phi}\,\sigma(g,\theta)\sqrt{J(q)}\sqrt{J(p')}h(q')\\
-\int_{\mathbb{R}^3} d q \int_{\mathbb{S}^2}d\omega\,[1-\chi(g)] \,v_{\phi}\,\sigma(g,\theta)\sqrt{J(q)J(p)}h(q). \nonumber
\end{gathered}
\end{equation}
The remaining operator can be expressed by $$K^\chi(h)(p)=\int_{\mathbb{R}^3} dq \,k^\chi (p,q)\, h(q).$$
We can also express $K$ as
$$K (h)(p)=\int_{\mathbb{R}^3} dq\, k(p,q) h(q)=\int_{\mathbb{R}^3} dq\, \left\{k^a (p,q)+k^b (p,q)\right\} h(q).$$
For the formulas of the kernels $k^\chi,\,k^a$ and $k^b$, we refer to Section 2 of [6] for instance.

We can obtain the following estimates under the hard potential assumption (1.5). 
\begin{lem}\cite{r6}. We have
\begin{equation*}
  \nu(p) \approx
      \left(p^0\right)^{a \over 2}. 
      \end{equation*}
\end{lem}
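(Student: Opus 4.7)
The plan is to establish the two-sided bound $\nu(p) \approx (p^0)^{a/2}$ directly from the definition
\begin{equation}
\nu(p) = \int_{\mathbb{R}^3} dq \int_{\mathbb{S}^2} d\omega\, v_\phi(p,q)\, \sigma(g,\theta)\, J(q), \nonumber
\end{equation}
by sandwiching $\sigma$ via the hard potential assumption (1.5) and then integrating against the Maxwellian. The geometric inputs I would use throughout are the identities $g^2 = 2(p^0 q^0 - p\cdot q - 1)$, $s = g^2+4$, and $v_\phi = g\sqrt{s}/(p^0 q^0)$, together with the elementary bound $g^2 \lesssim p^0 q^0$ valid uniformly, and the reverse bound $g^2 \gtrsim p^0$ on a region where $|q|\sim 1$ and $\hat p \cdot \hat q$ is bounded away from $1$.

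For the upper bound, substituting $\sigma \lesssim (g^a + g^{-b})\sigma_0(\theta)$ gives
\begin{equation}
\nu(p) \lesssim \int_{\mathbb{R}^3}\!\! dq\,\frac{g\sqrt{s}}{p^0 q^0}\,(g^a + g^{-b})\,J(q)\int_{\mathbb{S}^2}\sigma_0(\theta)\, d\omega. \nonumber
\end{equation}
The angular integral is finite since $\gamma>-2$. The $g^a$ piece is controlled via $g \lesssim \sqrt{p^0 q^0}$ and $\sqrt{s} \lesssim \sqrt{p^0 q^0}$, so the integrand is at most $(p^0)^{a/2}(q^0)^{a/2}J(q)$, whose $q$-integral is $\lesssim (p^0)^{a/2}$. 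For the $g^{-b}$ piece, I would observe that near the diagonal $q=p$ one has $g \sim |q-p|$, so $v_\phi\, g^{-b} \sim g^{1-b}$, and the combined singular behavior of this factor with $\sigma_0(\theta)$ is locally integrable in $(q,\omega)$ precisely under the assumption $b < \min\{4,4+\gamma\}$; the exponential decay of $J(q)$ then gives a bounded contribution, absorbed into $(p^0)^{a/2}$.

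For the lower bound, I would use $\sigma \gtrsim (g/\sqrt{s})g^a \sigma_0(\theta)$, which cancels the $\sqrt{s}$ in $v_\phi$ to produce
\begin{equation}
v_\phi\,\sigma(g,\theta) \gtrsim \frac{g^{a+2}}{p^0 q^0}\, \sigma_0(\theta). \nonumber
\end{equation}
Restricting the $q$-integral to $\{|q|\sim 1,\; \hat p\cdot\hat q \leq 1/2\}$, where $J(q)\gtrsim 1$, $q^0 \sim 1$, and $g^2 \gtrsim p^0$, one gets $g^{a+2}/(p^0 q^0) \gtrsim (p^0)^{a/2}$ pointwise; since $\sigma_0$ is nonzero on a set of positive measure, the remaining angular integral has a positive lower bound. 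This yields $\nu(p)\gtrsim (p^0)^{a/2}$ for large $|p|$, and for bounded $|p|$ the function $\nu$ is continuous and strictly positive so the bound is trivial.

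The main obstacle is the near-diagonal analysis, namely verifying $g \sim |p-q|$ as $q \to p$ and controlling the coupled singularity of $g^{-b}$ and $\sigma_0(\theta)$ under the sharp assumption $b<\min\{4,4+\gamma\}$; the rest is a routine combination of the geometric estimates on $g$ and integrability of the Maxwellian $J(q)$ against polynomial weights in $q^0$.
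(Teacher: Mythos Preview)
The paper does not prove this lemma; it is simply quoted from Wang~\cite{r6}. Your direct argument---sandwiching $\sigma$ via (1.5), using $g^2\lesssim p^0q^0$ and $s\lesssim p^0q^0$ for the upper bound on the $g^a$ piece, and restricting to $\{|q|\sim 1,\ \hat p\cdot\hat q\le 1/2\}$ where $g^2\gtrsim p^0$ for the lower bound---is the standard route and is correct.

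Two small corrections are worth noting. First, in $\nu(p)$ the angular integral $\int_{\mathbb{S}^2}\sigma_0(\theta)\,d\omega$ is a constant independent of $(p,q)$ (by rotational invariance, since $\cos\theta=k(p,q)\cdot\omega$ with $|k|=1$), so the $g^{-b}$ singularity in $q$ and the $\sigma_0$ singularity in $\omega$ decouple rather than combine; convergence requires only $\gamma>-2$ and $b<4$ separately, and the sharper constraint $b<4+\gamma$ plays no role for $\nu$. Second, the near-diagonal behavior is $|p-q|/\sqrt{p^0q^0}\lesssim g\le |p-q|$ (as recalled in the paper just after (4.7)), so ``$g\sim|p-q|$'' is direction-dependent; nevertheless the lower bound already yields $v_\phi\, g^{-b}\lesssim (p^0q^0)^{b/2-1}|p-q|^{1-b}$ locally, which is integrable in $q$ precisely when $b<4$.
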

\begin{lem}\cite{r6}. We have
\begin{equation}
k^a (p,q) \lesssim \begin{cases}
\left(p^0 q^0\right)^{{a-\gamma-2} \over 4} e^{-c|p-q|} & \text{for $a \geq \gamma \geq 1$,}\\
\left(p^0 q^0\right)^{{a-\gamma-2} \over 4} g^{\gamma-1} (p,q) e^{-c|p-q|} & \text{for $a \geq \gamma \geq 0, \quad \gamma<1$,}\\
\left(p^0 q^0\right)^{-{1 \over 2}+{{\zeta_1} \over 4}} e^{-c|p-q|} & \text{for $1 \leq a <\gamma$,}\\
\left(p^0 q^0\right)^{-{1 \over 2}+{{\zeta_1} \over 4}} g^{a-1} (p,q)e^{-c|p-q|} & \text{for $a<\gamma, \quad a <1$,}\\
\left(p^0 q^0\right)^{{a+|\gamma|-2} \over 4} g^{-|\gamma|-1}(p,q) e^{-c|p-q|} & \text{for $-2<\gamma<0,\quad a\leq 2+\gamma$,}\\
\end{cases}
\end{equation}
and,
\begin{equation}
k^b (p,q) \lesssim \begin{cases}
\left(p^0 q^0\right)^{-{1 \over 2}-{\zeta_2 \over 4}} g^{-b-1} (p,q)e^{-c|p-q|} & \text{for $\gamma \geq 0,\quad b<2$},\\
\left(p^0 q^0\right)^{ {|\gamma|-b-2} \over 4} g^{-b-1} (p,q) e^{-c|p-q|} & \text{for $-b<\gamma < 0,\quad |\gamma|<b<2$},\\
\left(p^0 q^0\right)^{{|\gamma|-b-2} \over 4} g^{-|\gamma|-1} (p,q) e^{-c|p-q|} & \text{for $-2< \gamma< 0,\quad |\gamma|\geq b$},\\
\end{cases}
\end{equation}
where $\zeta_1=\max\{-2,a-\gamma\}$, $\zeta_2=\min\{2,b+\gamma\}.$
\end{lem}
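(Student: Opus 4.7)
Since Lemma 2.2 is quoted from \cite{r6}, the plan is to retrace the argument there, which is in the spirit of Glassey--Strauss/Strain-type kernel estimates adapted to the hard potential assumption (1.5). First I would start from the explicit integral representations of $k^a(p,q)$ and $k^b(p,q)$ obtained in Section 2 of \cite{r6}, where the $\omega$-integral defining $K^\chi$ is reduced (via the standard dual change of variables $\omega \mapsto q'$ on the collisional sphere $E_{p-q}^\perp$) to an integral over the hyperplane perpendicular to $p-q$ passing through $(p+q)/2$. This produces the usual prefactor $\tfrac{1}{g\sqrt{s}}$ times $\sqrt{J(q)J(q')}$ times the piece of $\sigma(g,\theta)$ under consideration; splitting $\sigma\lesssim(g^a+g^{-b})\sigma_0(\theta)$ via (1.5) then separates the total kernel into $k^a$ and $k^b$.

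The next step is to extract the exponential factor $e^{-c|p-q|}$. This is done by the Povzner-type observation that, on the collisional hyperplane,
\begin{equation*}
p'^{\,0}+q'^{\,0}=p^0+q^0\gtrsim 1+|p-q|,
\end{equation*}
so $\sqrt{J(q)J(q')}=\tfrac{1}{4\pi}e^{-(q^0+q'^{\,0})/2}\lesssim e^{-c(p^0+q^0)}\lesssim e^{-c|p-q|}\,e^{-c(p^0+q^0)/2}$. The residual factor $e^{-c(p^0+q^0)/2}$ absorbs any polynomial growth in $p^0,q^0$, so I can reduce matters to estimating the remaining $\theta$-integral with the polynomial prefactors $g^a/\sqrt{s}$ or $g^{-b}/\sqrt{s}$ and $\sigma_0(\theta)\lesssim\sin^\gamma\theta$. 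The polynomial powers of $p^0,q^0$ appearing on the right-hand side of (2.1)--(2.2) are then obtained by trading: on the relevant region of the hyperplane one has $g\lesssim\sqrt{p^0q^0}$ and $s=g^2+4$, so $g^a/\sqrt{s}\lesssim (p^0q^0)^{(a-\gamma-2)/4}\cdot g^{\gamma}$ after factoring out the sharp singularity in $g$ that must remain (this is what produces the different cases $a\gtrless\gamma$, $\gamma\gtrless 0$ in (2.1) and $\gamma\gtrless 0$, $b\gtrless|\gamma|$ in (2.2)).

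The last step is the angular integration, and this is where I expect the main work to be. One needs to evaluate $\int_{\mathbb{S}^1}\sin^\gamma\theta\,d\theta$ (after reducing to polar coordinates on the hyperplane with axis $p-q$), taking care of the range $\gamma>-2$ which keeps the integral convergent, and then patch the resulting bound to the near-diagonal region $g\lesssim 1$ where the genuinely singular factor $g^{-|\gamma|-1}$ or $g^{-b-1}$ survives. The careful bookkeeping between the four subregimes of $\gamma$ in (2.1) and the three subregimes in (2.2) is the hard part: in each case one must verify that the chosen exponent $\zeta_1=\max\{-2,a-\gamma\}$ or $\zeta_2=\min\{2,b+\gamma\}$ is precisely what is needed for the $(p^0q^0)$ power to absorb the $g$-power at high energies while leaving an integrable $g$-singularity near $g=0$. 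For $k^b$ with $-2<\gamma<0$, in particular, one has to balance the singularity of $\sigma_0$ against the $g^{-b-1}$ prefactor, which forces the dichotomy $|\gamma|<b$ versus $|\gamma|\ge b$ seen in (2.2). Once each subregime is handled separately, the pointwise bounds (2.1)--(2.2) follow.
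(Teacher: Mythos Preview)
The paper gives no proof of this lemma; it is stated with a citation to \cite{r6} and nothing more. Your outline is therefore not competing with any argument in the paper itself, and in overall strategy (integral representation of the kernel, extraction of exponential decay, angular integration, then case-by-case bookkeeping in $a,b,\gamma$) it is consistent with how such estimates are obtained.

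Two steps in your sketch would not go through as written, however. First, the hyperplane ``perpendicular to $p-q$ through $(p+q)/2$'' is the classical Carleman picture; the relativistic reduction in \cite{r6} (following \cite{r3}) uses a different parametrization adapted to the relativistic collision manifold, and the resulting kernel formulas are not a direct transcription of the non-relativistic ones. Second, your chain
\[
\sqrt{J(q)J(q')}=\tfrac{1}{4\pi}e^{-(q^0+q'^{0})/2}\lesssim e^{-c(p^0+q^0)}
\]
does not follow from energy conservation: the conserved quantity is $p^0+q^0=p'^{0}+q'^{0}$, not $q^0+q'^{0}$, so there is no reason for $q'^{0}\gtrsim p^0$ in general. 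The actual extraction of the factor $e^{-c|p-q|}$ in \cite{r6} relies on the explicit relativistic kernel formulas together with $g(p,q)\le|p-q|$ and the specific Maxwellian weights that survive \emph{after} the change of variables, not on the heuristic you give. Relatedly, your remark that a residual $e^{-c(p^0+q^0)/2}$ ``absorbs any polynomial growth'' would erase the very $(p^0q^0)^{\alpha}$ prefactors the lemma asserts; those exponents are sharp and must be tracked through the computation, not discarded. To recover the precise powers in (2.1)--(2.2) you will need to follow the formulas in \cite{r6} directly rather than argue by analogy with the non-relativistic case.
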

\begin{lem}We have
\begin{equation*}
  \left| k^\chi (p,q) \right| \leq C_\chi
      \left\{\left(p^0 q^0\right)^{-\zeta_a+{a \over 4}} +\left(p^0 q^0\right)^{-\zeta_b -{b \over 4}}\right\}e^{-c|p-q|} \leq C_\chi e^{-c|p-q|}.
      \end{equation*}
where $\zeta_a=\min\{2-|\gamma|,4+a\}/4>0$ and $\zeta_b=\min\{2-|\gamma|,4-b\}>0.$
\end{lem}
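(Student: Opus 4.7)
The natural strategy is to take Lemma 2.2 as the starting point and observe that on the support of $\chi(g)$ we have $g \geq \epsilon$, so every negative power of $g$ appearing in the case-by-case bounds of Lemma 2.2 becomes a finite constant depending on $\epsilon$, which is absorbed into $C_\chi$. What remains are only factors of the form $(p^0 q^0)^\alpha$ and $e^{-c|p-q|}$. I would then match these $\alpha$ exponents against $-\zeta_a + a/4$ and $-\zeta_b - b/4$ case by case.

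Step 1. Decompose $k^\chi = k^{\chi,a} + k^{\chi,b}$ using the upper bound $\sigma(g,\theta) \lesssim (g^a + g^{-b})\sigma_0(\theta)$ from (1.5). The two pieces correspond exactly to the two kernels $k^a$ and $k^b$ of Lemma 2.2, with the extra factor $\chi(g)$ inserted.

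Step 2. For $k^{\chi,a}$, run through the five regimes of Lemma 2.2. In each regime the bound contains a factor of the form $g^\beta e^{-c|p-q|}$ with $\beta \in \{0, \gamma-1, a-1, -|\gamma|-1\}$; on $\{g \geq \epsilon\}$ either $\beta \geq 0$ and $g^\beta \lesssim g^{a}$ contributes to the $(p^0 q^0)$ exponent via $g \leq \sqrt{2 p^0 q^0}$, or $\beta < 0$ and $g^\beta \leq \epsilon^\beta$ is absorbed into $C_\chi$. I would verify that the resulting $(p^0 q^0)$-exponent is bounded above by $(a - \min\{2-|\gamma|, 4+a\})/4 = -\zeta_a + a/4$ in every case. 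The only non-trivial comparisons happen for the first two (where the exponent from Lemma 2.2 is $(a-\gamma-2)/4$ and $\gamma \geq 0$ gives the needed inequality) and for the fifth (where $a \leq 2+\gamma$ makes $\zeta_a = (2+\gamma)/4$ and matching is exact).

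Step 3. For $k^{\chi,b}$, do the analogous analysis over the three regimes of Lemma 2.2 for $k^b$. Since $b \geq 0$, every $g$-power now has negative exponent on $\{g \geq \epsilon\}$ and goes into $C_\chi$; the $(p^0 q^0)$-exponent is checked against $-\zeta_b - b/4$, with the worst case being $-1/2 - \zeta_2/4$ when $\gamma \geq 0$. Finally, since $\zeta_a, \zeta_b + b/4 > 0$ (by the stated positivity of $\zeta_a,\zeta_b$) and $p^0, q^0 \geq 1$, both prefactors are bounded by constants, yielding the second inequality $\leq C_\chi e^{-c|p-q|}$; any residual polynomial $(p^0 q^0)^N$ with $N \geq 0$ that survives at a boundary case can be absorbed into the exponential at the cost of slightly decreasing $c$, using the exponential decay in $|p| + |q|$ implicit in the Maxwellian weights $\sqrt{J}$ that are already built into the derivation of $k$.

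The main obstacle is the bookkeeping in Step 2: the five cases of Lemma 2.2 have four different prefactor shapes and each must be checked to lie below the single target exponent $-\zeta_a + a/4$. Getting this uniformly (and handling the borderline $\gamma = 0$ transition, as well as the sign conventions in $|\gamma|$ versus $\gamma$) is where care is required; the rest of the argument is essentially direct substitution.
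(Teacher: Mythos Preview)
Your plan matches the paper's one-line proof (``easily obtained from Lemma 2.2''): use $g \geq \epsilon$ on the support of $\chi$ to convert the $g$-powers in Lemma 2.2 into constants depending on $\epsilon$, then read off the $(p^0 q^0)$-exponent case by case and compare to the target. The bookkeeping you outline in Step 2 is exactly what is required, and in fact every $g$-exponent occurring in Lemma 2.2 is $\leq 0$, so the ``$\beta \geq 0$'' branch is never actually invoked.

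One point in Step 3 is wrong and should be removed. You claim that a residual factor $(p^0 q^0)^N$ with $N \geq 0$ ``can be absorbed into the exponential at the cost of slightly decreasing $c$, using the exponential decay in $|p|+|q|$ implicit in the Maxwellian weights.'' This is false: the kernel $k^\chi(p,q)$ decays only in $|p-q|$, not in $|p|+|q|$ (take $p=q$ with $|p|$ large). The Maxwellian factors $\sqrt{J}$ are already fully spent in producing the bounds of Lemma 2.2; nothing further is available. Fortunately you never need this fallback: a direct check (using $a \leq 2+\gamma$) shows every $(p^0 q^0)$-exponent in Lemma 2.2 is $\leq 0$, so the rightmost bound $C_\chi e^{-c|p-q|}$ follows immediately from Lemma 2.2. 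Note, incidentally, that you should obtain it this way rather than from the intermediate expression: when $\gamma>0$ and $2-\gamma < a \leq 2+\gamma$ the exponent $-\zeta_a + a/4 = (a+|\gamma|-2)/4$ is strictly positive, so the displayed chain does not literally hold as written; both estimates are separately true consequences of Lemma 2.2.
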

\begin{proof}
The lemma is easily obtained from Lemma 2.2.
\end{proof}

\begin{lem}Fix any $l\geq0$ and any $j>0.$ Given any small $\eta>0,$ which depends upon $\chi,$ we have
$$\left|w_l (p) K^{1-\chi}(h)(p)\right| \leq \eta e^{-cp^0} ||w_{-j} h||_{L^{\infty}_q}.$$
\end{lem}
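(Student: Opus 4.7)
The plan is to exploit that $K^{1-\chi}(h)$ is supported on $\{g \le 2\epsilon\}$: on this support the Maxwellian factors yield exponential decay of order $e^{-cp^0}$, while the radial integrals of $g^{a+1}$ and $g^{1-b}$ produce a positive power of $\epsilon$, which for sharp enough $\chi$ supplies the required small factor $\eta$.

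First, split $K^{1-\chi}(h)$ into its three integrals and estimate them separately. On the support $g \le 2\epsilon$ one has $\sqrt{s}=\sqrt{g^2+4}\lesssim 1$, so the hard potential assumption (1.5) gives
\[
v_\phi\,\sigma(g,\theta)\ \lesssim\ \frac{g^{a+1}+g^{1-b}}{p^0 q^0}\,\sigma_0(\theta),
\]
and the angular integral $\int_{\mathbb{S}^2}\sigma_0(\theta)\,d\omega$ is finite since $\gamma>-2$. The geometric heart of the argument is the identity $g^2=|p-q|^2-(p^0-q^0)^2$, which follows directly from $g^2=2(p^0 q^0-p\cdot q-1)$. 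Writing $q-p=\tau\hat p + p_\perp$ with $\hat p=p/|p|$ and $p_\perp\perp p$, a Taylor expansion yields $g^2\approx|p_\perp|^2+(\tau/p^0)^2$, so $\{g\le 2\epsilon\}$ is a thin anisotropic tube characterised by $|p_\perp|\lesssim\epsilon$ and $|\tau|\lesssim\epsilon p^0$. In particular $|q-p|\lesssim\epsilon p^0$, hence $q^0\in[(1-C\epsilon)p^0,(1+C\epsilon)p^0]$; combining this with the collision formulas (1.6)--(1.7) and $(p^0+q^0)/\sqrt{s}\lesssim p^0$ gives $|p'-(p+q)/2|\lesssim g\,p^0\lesssim\epsilon p^0$, so $p'^0$ and $q'^0$ are likewise comparable to $p^0$.

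Rescaling $\tau\mapsto\tau/p^0$ (Jacobian $p^0$) and passing to polar coordinates produces the key volume bound
\[
\int_{\mathbb{R}^3}[1-\chi(g)]\,g^\alpha\,dq\ \lesssim\ \epsilon^{\alpha+3}\,p^0\qquad(\alpha>-3),
\]
which specialises to $\epsilon^{a+4}p^0$ for $\alpha=a+1$ and $\epsilon^{4-b}p^0$ for $\alpha=1-b$ (the latter valid since $b<4$). On the support, the Maxwellian factors $\sqrt{J(q)J(q')}$, $\sqrt{J(q)J(p')}$ and $\sqrt{J(p)J(q)}$ are each bounded by $e^{-(1-C\epsilon)p^0}$, and $|h(q)|,|h(p')|,|h(q')|\le C(p^0)^j\|w_{-j}h\|_{L^\infty_q}$ thanks to the comparability of energies. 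Putting everything together and multiplying by $w_l(p)=(p^0)^l$ yields
\[
|w_l(p)K^{1-\chi}(h)(p)|\ \lesssim\ (\epsilon^{a+4}+\epsilon^{4-b})\,(p^0)^{l+j-1}\,e^{-(1-C\epsilon)p^0}\,\|w_{-j}h\|_{L^\infty_q};
\]
absorbing the polynomial into the exponential and setting $\eta:=C_{l,j}(\epsilon^{a+4}+\epsilon^{4-b})\to 0$ as $\epsilon\to 0$ finishes the proof.

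The main obstacle I anticipate is the geometric/volumetric step: verifying uniformly on $\{g\le 2\epsilon\}$ that all four energies $p^0,q^0,p'^0,q'^0$ are comparable, so that the same exponential decay and volume estimate can be extracted from each of the three pieces of $K^{1-\chi}$. Once this comparability is established the remaining estimates are routine, but the tube's longitudinal length $\epsilon p^0$ grows with $|p|$ and must be tracked carefully through the $\tau\mapsto\tau/p^0$ rescaling for the smallness in $\epsilon$ to survive.
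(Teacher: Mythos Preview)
Your proposal is correct and follows exactly the route the paper has in mind: the paper gives no argument of its own, stating only that the lemma ``can be proved similarly to Lemma~4.6 in~[3]'' (Strain, 2010), and your sketch is precisely that argument --- exploit the localization $g\le 2\epsilon$ to force $q^0,p'^0,q'^0\approx p^0$, harvest $e^{-cp^0}$ from the Maxwellian factors, and extract smallness from the anisotropic volume $\int_{\{g\le 2\epsilon\}} g^\alpha\,dq\lesssim \epsilon^{\alpha+3}p^0$.

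One technical caution worth recording: the step ``a Taylor expansion yields $g^2\approx |p_\perp|^2+(\tau/p^0)^2$'' is used both to delimit the support and, for $\alpha=1-b<0$, to control the singular integrand $g^{1-b}$ near $g=0$. For the latter you need a genuine two-sided inequality $c\,\big(|p_\perp|^2+(\tau/p^0)^2\big)\le g^2\le C\,\big(|p_\perp|^2+(\tau/p^0)^2\big)$ on the region, not just a leading-order approximation. The lower bound follows from the inequality quoted in the paper, $g\ge (|p\times q|^2+|p-q|^2)^{1/2}/\sqrt{p^0 q^0}$, after noting $|p\times q|=|p|\,|p_\perp|$ and $q^0\approx p^0$; with this in hand the rescaled polar-coordinate computation goes through rigorously and yields the claimed $\epsilon^{4-b}$ (finite since $b<4$). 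This is routine, but since you flag the geometric step as the main obstacle, it is the one place where the heuristic should be tightened.
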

\begin{proof}
We can prove the lemma similarly for Lemma 4.6 in \cite{r3}.
\end{proof}

We will use the following estimates to construct global solutions.
\begin{lem}\cite{r6}. Let F be a solution to the relativistic Boltzmann equation (1.1). We have 
\begin{equation}
\begin{gathered}
\hspace{-15mm}\int_{\mathbb{R}^3 \times \mathbb{R}^3} {\left|F(t,x,p)-J(p)\right|^2 \over 4J(p)}\, \chi_{|F(t,x,p)-J(p)| \leq J(p)}\, d x \,d p\\+\int_{\mathbb{R}^3 \times \mathbb{R}^3} {\left|F(t,x,p)-J(p)\right| \over 4}\,\chi_{|F(t,x,p)-J(p)|\geq J(p)} \,d x \,d p\\ \lesssim |M_0|+|E_0|+|H_0|,
\end{gathered}
\end{equation}
where $\chi_{|F(t,x,p)-J(p)| \leq J(p)}$ is the characteristic function of $\{(t,x,p): |F(t,x,p)-J(p)| \leq J(p)\}$ and $\chi_{|F(t,x,p)-J(p)|\geq J(p)}=1-\chi_{|F(t,x,p)-J(p)| \leq J(p)}.$  
\label{lemg1}
\end{lem}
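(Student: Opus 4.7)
The plan is to derive the stated inequality from the standard relative entropy functional
$$\mathcal{H}(F|J) := F \ln \frac{F}{J} - (F - J) = \{F \ln F - J \ln J\} - (1 + \ln J)(F - J),$$
which is pointwise non-negative because $x \mapsto x \ln x$ is convex with tangent line $1 + \ln J$ at $x = J$. Since $\ln J(p) = -p^0 - \ln(4\pi)$, the linear correction contains no $p$-weight, only a constant and a $p^0$ factor. Integrating over $\mathbb{R}^3_x \times \mathbb{R}^3_p$ and applying the excess conservations (1.7) together with the excess entropy inequality (1.8), one gets
$$0 \leq \int_{\mathbb{R}^3 \times \mathbb{R}^3} \mathcal{H}(F|J)\, dx\, dp \leq H_0 + E_0 + (\ln(4\pi) - 1) M_0 \lesssim |M_0| + |E_0| + |H_0|.$$
Note that momentum $J_0$ does not appear, because $\ln J$ depends only on $p^0$.

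Next, convert $\mathcal{H}$ into the two pieces appearing in the lemma. Setting $u = F/J \geq 0$ and $\phi(u) = u \ln u - u + 1$, one has $\mathcal{H}(F|J) = J \phi(u)$ with $\phi(1) = \phi'(1) = 0$ and $\phi''(u) = 1/u$. From this one extracts two elementary pointwise bounds:
\begin{itemize}
\item For $u \in [0,2]$ (equivalently $|F-J| \leq J$): Taylor's theorem with $\phi''(u) \geq 1/2$ on $[0,2]$ gives $\phi(u) \geq c(u-1)^2$, whence $\mathcal{H}(F|J) \geq c (F-J)^2 / J$.
\item For $u \geq 2$ (equivalently $|F-J| \geq J$): since $\phi(u)/u \to \infty$ and $\phi(2) > 0$, one has $\phi(u) \geq c u$ on $[2,\infty)$, so $\mathcal{H}(F|J) \geq c J u = c F \geq c |F-J|$.
\end{itemize}
Integrating the first bound on $\{|F-J| \leq J\}$ and the second on $\{|F-J| \geq J\}$, summing, and combining with the entropy bound above yields the claim; the harmless factors $1/4$ on the left are absorbed by $\lesssim$.

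The proof is entirely structural: no relativistic kinematics or scattering kernel enter beyond the fact that $Q(J,J) = 0$, which is what secures (1.7)–(1.8) via the pre/post-collision symmetries. The only step that requires any thought is the sharp dichotomy on $\phi$ for large $u$, and even that is a routine one-variable exercise. In this sense Lemma 2.5 is essentially a Csisz\'ar–Kullback–Pinsker type bound specialized to the Jüttner distribution, and the same argument transfers from the classical Boltzmann setting used in \cite{r6}. Hence the main obstacle is not mathematical but merely bookkeeping: keeping track of the constants $M_0$, $E_0$, $H_0$ with the right signs when combining (1.7) and (1.8).
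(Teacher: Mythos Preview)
Your argument is correct and is the standard relative entropy computation; the paper itself gives no proof for this lemma, simply citing \cite{r6}, where the same approach is used. One small inaccuracy: writing ``$u\geq 2$ (equivalently $|F-J|\geq J$)'' is not quite an equivalence, since $|F-J|\geq J$ with $F\geq 0$ also permits $F=0$ (i.e.\ $u=0$); but there $\phi(0)=1=|u-1|$, so the linear lower bound still holds and nothing changes.
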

\begin{lem}\cite{r6}. Let $l \geq1$ and $1<d<\min\left\{{9 \over 8}, {2 \over {\max\{-\gamma,1\}}},{3 \over \max\{b-1,1\}}\right\}.$ 
We have
\begin{equation}
w_l (p) \left|\left|\Gamma (f,f) \right|\right|_{L^\infty_x} (s,p) \leq C \nu (p) \left|\left|w_l f(s)\right|\right|^{{9d+1} \over 5d}_{L^{\infty}_{x,p}} \sup_{y \in \mathbb{R}^3} \left\{\int_{\mathbb{R}^3} |f(s,y,q)|\,dq\right\}^{{d-1} \over 5d},
\end{equation}
\begin{equation}
w_l (p) \left|\left|\Gamma (f,f) \right|\right|_{L^1_x} (s,p) \leq C \nu (p) \left|\left|w_l f(s)\right|\right|^{{9d+1} \over 5d}_{L^{\infty}_p L^2_x}\left\{\int_{\mathbb{R}^3} \left|\left|f\right|\right|_{L^2_x} (s,q)\,dq\right\}^{{d-1} \over 5d},
\end{equation}
where the constant $C>0$ depends only on $a,\,b,\,\gamma,\,d.$
\label{lemg2}
\end{lem}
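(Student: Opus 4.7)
The two estimates are interpolation bounds that exploit the bilinearity of $\Gamma$: the exponents $(9d+1)/(5d)$ and $(d-1)/(5d)$ sum to $2$. Setting $\beta := (d-1)/(5d) \in [0,1/5)$, the plan for (1.13) is to split $\Gamma(f,f) = \Gamma_g(f,f) - \Gamma_\ell(f,f)$ into gain and loss, pull one $|f|$ factor outside pointwise using the weight $w_l$, and interpolate the remaining $|f(x,q)|$ inside the $dq$-integral by writing $|f| = |f|^{1-\beta}|f|^{\beta}$, bounding $|f|^{1-\beta}$ pointwise by $\|w_l f\|_\infty^{1-\beta}w_{-l(1-\beta)}(q)$, and applying H\"older in $dq\,d\omega$ with conjugate exponents $1/(1-\beta)$ and $1/\beta$.

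For the loss term $\Gamma_\ell(f,f)(x,p) = f(x,p)\int v_\phi\sigma(g,\theta)\sqrt{J(q)}f(x,q)\,dq\,d\omega$, the prefactor contributes $w_l(p)|f(x,p)| \le \|w_l f\|_{L^\infty_{x,p}}$. The above H\"older step on the remaining integral produces
\begin{equation*}
\|w_l f\|_\infty^{1-\beta}\Bigl(\int v_\phi^{1/(1-\beta)}\sigma^{1/(1-\beta)} J(q)^{1/(2(1-\beta))} w_{-l}(q)\,dq\,d\omega\Bigr)^{1-\beta}\Bigl(\int|f(x,q)|\,dq\Bigr)^{\beta}.
\end{equation*}
By Lemma 2.1 and the hard potential assumption (1.5), together with the three upper bounds on $d$ (which are sharp so that the singular parts $g^a$ and $g^{-b}$ of $\sigma$ remain integrable after being raised to the power $5d/(4d+1)$ against the surviving weights), this first bracket is bounded by $C\nu(p)$. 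Collecting the powers of $\|w_l f\|_\infty$ then yields the exponent $1+(1-\beta) = (9d+1)/(5d)$. For the gain term I would use the kernel representation of Section 2 of [6] to express $\Gamma_g$ as $\int k(p,q)\,|f(x,q)|\,|f(x,P)|\,dq$, with $P=P(p,q,\omega)$ obtained by a change of variables, bound $|f(x,P)|$ pointwise by $\|w_l f\|_\infty w_{-l}(P)$, and then repeat the same $|f|^{1-\beta}|f|^\beta$ splitting on $|f(x,q)|$. Lemma 2.2 supplies the decay of $k^a+k^b$ needed to make the corresponding kernel integral bounded by $\nu(p)$.

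Estimate (1.14) is proved by exactly the same scheme with $L^\infty_x$ replaced by $L^2_x$: applying $\|\cdot\|_{L^1_x}$ to $\Gamma(f,f)$, using Cauchy--Schwarz in $x$ on the product $f(x,p)f(x,q)$ (or $f(x,p')f(x,q')$ after the change of variables for the gain) and then Minkowski in $dq\,d\omega$, one is left with a $dq\,d\omega$ integral involving $\|f(\cdot,q)\|_{L^2_x}$ in place of $|f(x,q)|$, to which the same H\"older interpolation applies with $\|w_l f\|_{L^\infty_p L^2_x}$ replacing $\|w_l f\|_{L^\infty_{x,p}}$ and $\int \|f\|_{L^2_x}(s,q)\,dq$ replacing $\sup_y\int |f(y,q)|\,dq$. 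The main obstacle, in both bounds, is verifying that the three upper bounds on $d$ in the lemma are exactly what is needed for the kernel integrals (including the local singularities $g^{\gamma-1}$ and $g^{-b-1}$ coming from Lemma 2.2) to converge uniformly in $p$, and that the resulting prefactor is indeed $\nu(p)$ rather than something growing faster in $p^0$.
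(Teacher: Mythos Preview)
The paper itself gives almost no detail: for (2.4) it cites Lemma~4.1 of \cite{r6}, and for (2.5) it writes ``Similarly, we can get (2.5) by using Cauchy--Schwarz inequality.'' Your scheme for (2.5)---Cauchy--Schwarz in $x$ on the product of the two $f$-factors, then Minkowski in $(q,\omega)$, then the same H\"older interpolation with $\|f(\cdot,q)\|_{L^2_x}$ replacing $|f(x,q)|$---is exactly what the paper intends.

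For (2.4), your H\"older split with $\beta=(d-1)/(5d)$ does yield the correct exponents, and the loss-term computation is sound. The gap is the gain term. The kernels $k^a,k^b$ of Lemma~2.2 are the kernels of the \emph{linearized} operator $K$; they are obtained only after one of the two unknowns in the gain integral has already been replaced by $\sqrt{J}$, and there is no analogous kernel for the bilinear $\Gamma_{gain}(f,f)$. Invoking ``Lemma~2.2 supplies the decay of $k^a+k^b$'' is therefore a non sequitur, and the formula ``$\int k(p,q)|f(x,q)||f(x,P)|\,dq$'' is ill-formed if $P$ still depends on $\omega$. A correct gain-term argument (as in \cite{r6}) instead uses $w_l(p)\le 2^l w_l(p')w_l(q')$, from $p^0+q^0=p'^0+q'^0$, together with a change of variables in the post-collisional momenta so that the small-exponent factor produced by H\"older is controlled by $\sup_y\int|f(y,q)|\,dq$. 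Note, finally, that your loss-term H\"older by itself needs only $5d(b-1)<3(4d+1)$ and $5d|\gamma|<2(4d+1)$, both strictly weaker than the stated hypotheses; the sharper constraints $d<9/8$, $d<2/\max\{-\gamma,1\}$, $d<3/\max\{b-1,1\}$ must therefore enter through the gain-term analysis you have not carried out.
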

\begin{proof}
For (2.4) we refer to Lemma 4.1 in [6]. Similarly, we can get (2.5) by using Cauchy-Schwartz inequality.
\end{proof}
\begin{center}{
\section{Local existence}
}\end{center}
We can handle the case $a\geq2,\,\gamma\geq0.$ 
\begin{thm}
\label{ls}
Let $l >14+a/2,$ $\left|\left|w_{l} f_0 \right|\right|_{L^\infty_p (L^2_x \cap L^{\infty}_x)}\leq \widetilde{M}/2<\infty,$ and $F_0=J+\sqrt{J} f_0 (x,p) \geq 0.$ There exist $T^\star>0,$ $B>0,$ and $\epsilon>0$ such that if $T^\star \lesssim \widetilde{M}^{-1},$ $B\gtrsim \widetilde{M},$ and 
\begin{equation}
\sup_{0 \leq t \leq T^\star,\, x \in \mathbb{R}^3}\int_{|p|<B} \left|f_0 \left(x-\hat{p}t,p\right) \right|dp\leq \epsilon,
\end{equation}
then there is a unique local solution (1.9), f(t,x,p), to $(1.8)$ in $[0,T^\star] \times \mathbb{R}^3 \times \mathbb{R}^3$ satisfying $$\sup_{0\leq t \leq T^\star} \left|\left|w_{l} f(t)\right|\right|_{L^\infty_p (L^2_x \cap L^\infty_x)} \leq \widetilde{M}.$$
Moreover, if $M_0,\,E_0,$ and $H_0$ are finite, then (1.5) and (1.6) hold. The solution $f(t,x,p)$ is continuous if it is so initially. The positivity $F=J+\sqrt{J} f\geq0$ also holds. 
\end{thm}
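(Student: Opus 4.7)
The plan is to construct the solution by a Picard-type iteration on the mild formulation (1.9). Starting from $f^0\equiv 0$, define
\[
f^{n+1}(t,x,p)=e^{-\nu(p)t}f_0(x-\hat{p}t,p)+\int_0^t e^{-\nu(p)(t-s)}\bigl[K(f^n)+\Gamma(f^n,f^n)\bigr]\bigl(s,x-\hat{p}(t-s),p\bigr)\,ds,
\]
and show that, for suitable $T^\star$, $B$, and $\epsilon$, the sequence stays bounded in $N(f):=\sup_{0\le t\le T^\star}\|w_l f(t)\|_{L^\infty_p(L^2_x\cap L^\infty_x)}$ and is contractive in the same norm.

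For the uniform bound I split $K=K^\chi+K^{1-\chi}$. Lemma 2.3 gives $|k^\chi(p,q)|\le C_\chi e^{-c|p-q|}$, which by a convolution-type argument (passing the $x$-norm inside the $q$-integral via Minkowski) yields $\|w_l K^\chi(h)(\cdot,p)\|_{L^2_x\cap L^\infty_x}\lesssim \|w_l h\|_{L^\infty_q(L^2_x\cap L^\infty_x)}$ uniformly in $p$; Lemma 2.4 handles $K^{1-\chi}$ with a small prefactor $\eta$ by refining $\chi$. Combined with $\nu(p)\gtrsim 1$ from Lemma 2.1, the $K$-contribution in the mild formula adds at most $(\eta+CT^\star)N(f^n)$ to $N(f^{n+1})$. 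For the nonlinear term I invoke Lemma 2.6: bound (2.4) controls the $L^\infty_x$ part of $\Gamma$ directly, while the interpolation $\|\Gamma\|_{L^2_x}^2\le\|\Gamma\|_{L^1_x}\|\Gamma\|_{L^\infty_x}$ combined with (2.4)--(2.5) controls the $L^2_x$ part. Crucially, the $\nu(p)$ on the right of Lemma 2.6 is absorbed by the $e^{-\nu(p)(t-s)}$ kernel under the time integral, leaving a nonlinear contribution to $N(f^{n+1})$ of order $N(f^n)^{(9d+1)/(5d)}\,\mathcal{A}(f^n)^{(d-1)/(5d)}$, where $\mathcal{A}(f):=\sup_{0\le s\le T^\star,\,y\in\mathbb{R}^3}\int_{\mathbb{R}^3}|f(s,y,q)|\,dq$.

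The crucial step is to show $\mathcal{A}(f^n)$ is small, and this is where hypothesis (3.1) enters, since $\mathcal{A}$ is not naturally controlled by $N$. Splitting at $|q|=B$, the high-momentum piece is $\int_{|q|\ge B}(q^0)^{-l}\,dq\cdot N(f^n)\lesssim B^{3-l}N(f^n)$, small for $B$ large (we have $l>14+a/2>3$). For $|q|<B$ I substitute the mild form of $f^n$ itself: the free-streaming term is bounded by $\epsilon$ via (3.1); the $K$-contribution, using $|K(f^n)(\tau,\cdot,q)|\lesssim N(f^n)(q^0)^{-l}$ from the kernel decay and the weight, integrates in $q$ over $|q|<B$ and in time to give at most $CT^\star N(f^n)$; and the $\Gamma$-contribution is estimated similarly via Lemma 2.6. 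This yields
\[
\mathcal{A}(f^n)\le\epsilon+B^{3-l}N(f^n)+CT^\star N(f^n)+CT^\star N(f^n)^{(9d+1)/(5d)}\mathcal{A}(f^n)^{(d-1)/(5d)}.
\]
Choosing $B\gtrsim\widetilde{M}$, $T^\star\lesssim\widetilde{M}^{-1}$ (shrinking further if needed), and $\epsilon$ small relative to $\widetilde{M}$ makes $\mathcal{A}(f^n)$ small, and feeding this back into the nonlinear estimate closes the bootstrap $N(f^{n+1})\le\widetilde{M}$ by induction.

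The Cauchy property in $N$ follows from the same estimates applied to $f^{n+1}-f^n$, whose mild equation has source $K(f^n-f^{n-1})+\Gamma(f^n-f^{n-1},f^n)+\Gamma(f^{n-1},f^n-f^{n-1})$, giving a contraction factor of the form $\eta+CT^\star\widetilde{M}$ that can be made less than $1/2$ by choosing $\chi$ and $T^\star$ appropriately. The main obstacle throughout is precisely the control of $\mathcal{A}$, which does not sit inside the working norm $N$ and is what forces both the hypothesis (3.1) and the ball-splitting device. Once $f=\lim f^n$ is identified as the mild solution, continuity in $(t,x,p)$ passes to the limit from the iterates; positivity of $F=J+\sqrt{J}f$ follows by rewriting (1.1) as $\partial_t F+\hat{p}\cdot\nabla_x F+L(F)F=Q^+(F,F)\ge 0$, with $Q^+$ the gain term and $L(F)\ge 0$, and iterating with nonnegative data; and, when $M_0$, $E_0$, $H_0$ are finite, the excess conservation laws (1.5) and excess entropy inequality (1.6) follow by integrating (1.1) against $1$, $p$, $p^0$ and applying the $H$-theorem.
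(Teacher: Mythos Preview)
Your approach and the paper's diverge at the iteration scheme. The paper does not iterate on the mild form (1.9) with $e^{-\nu(p)t}$; it uses the gain--loss split (3.2),
\[
f^{n+1}(t,x,p)=f_0(y,p)\,e^{-\int_0^t B^n}+\int_0^t e^{-\int_s^t B^n}\bigl[K f^n+\Gamma_{\mathrm{gain}}(f^n,f^n)\bigr]\,ds,
\]
with $B^n=\int v_\phi\sigma\,F^n(q)\,dq\,d\omega$ the full loss frequency against $F^n=J+\sqrt{J}f^n$. That choice makes $F^n\ge 0$ automatic by induction, but forces the lower bound $-B^n\le -c\nu(p)$; this is precisely where hypothesis (3.1) is used in the paper (via (3.4)--(3.5)). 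Once that bound is secured, the nonlinear piece $\Gamma_{\mathrm{gain}}$ is handled by the direct estimate (3.7)--(3.8) and a splitting at $p^0=N$, in terms of $\|w_l f^n\|^2$ alone---Lemma~2.6 and the quantity $\mathcal{A}$ play no role in the paper's local argument.

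Your standard Picard iteration, together with Lemma~2.6 to trade a power of $N(f^n)$ for a small power of $\mathcal{A}(f^n)$, is a legitimate alternative for the boundedness and contraction parts, and it assigns (3.1) a different role (seeding the smallness of $\mathcal{A}$). The gap is in your positivity step. Writing $\partial_t F+\hat p\cdot\nabla_x F+L(F)F=Q^+(F,F)$ and ``iterating with nonnegative data'' is exactly the gain--loss scheme (3.2); for that second iteration to converge (so that its nonnegative limit coincides, by your uniqueness, with the Picard limit) you must show its iterates remain bounded in the weighted norm, and for $a\ge 2$ this requires $B^n\ge c\nu(p)$---i.e.\ the paper's estimate (3.4)--(3.5), which you have not supplied. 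So your route does not avoid the paper's central computation; it merely postpones it to the positivity argument, where it still has to be carried out.
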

\begin{proof}
We use the following iterating sequence $(n \geq0)$:
\begin{equation}
\begin{gathered}
\hspace{-35mm}f^{n+1} (t,x,p)=f_0 \left(y,p\right) e^{-\int_0^t B^n \left(s_1,y+\hat{p} s_1,p\right)\,d s_1}\\
\hspace{0mm}+\int_0^t e^{-\int_s^t B^n \left(s_1,y+\hat{p}s_1,p\right)\,d s_1}K f^n \left(s,y+\hat{p}s,p \right)\,ds \\ \hspace{0mm}+\int_0^t e^{-\int_s^t B^n \left(s_1,y+\hat{p}s_1,p \right)\,d s_1}\Gamma_{gain} \left(f^n,f^n \right) \left(s,y+\hat{p}s,p \right)\,ds 
\end{gathered}
\end{equation}
with $f^0=0$ and $f^{n+1}|_{t=0} =f_0.$
Here we have used the notation $y=x-\hat{p}t,$ 
\begin{eqnarray*}
B^n (s_1,x,p)&=&\int_{\mathbb{R}^3} dq\int_{\mathbb{S}^2} d\omega\,v_{\phi} \sigma (g,\theta)F^n (s_1,x,q)\\ &=&\int_{\mathbb{R}^3} dq\int_ {\mathbb{S}^2}d\omega\,v_{\phi} \sigma (g,\theta)\left[J(q)+\sqrt{J(q)}f^n (s_1,x,q)\right],
\end{eqnarray*}
and
\begin{equation}
\Gamma_{gain}(f^n,f^n)=\int_{\mathbb{R}^3} dq\int_{\mathbb{S}^2} d\omega\,v_{\phi} \sigma(g,\theta) \sqrt{J(q)}f^n (p')f^n (q').
\end{equation}
By an induction and $F_0 \geq 0$, we can easily obtain $F^n=J+\sqrt{J} f^n \geq 0$ for all $n$, and then $B^n (s_1,x,p)\geq 0$.

Assume that for $k=n-1,n,$ $$\sup_{0 \leq t \leq T^\star} \left|\left|w_l f^k (t)\right|\right|_{L^\infty_p (L^2_x \cap L^\infty_x)}\leq \widetilde{M}.$$ 
We show that for $k=n+1$ the above holds. 
For the case $a \geq 0,\,\gamma \geq 0$, we will need to control $-B^n$. Clearly, for any $B>0$ we have
\begin{eqnarray}
\begin{gathered}
\hspace{-50mm}-B^n (s_1,y+\hat{p}s_1,p) \\ \hspace{20mm}\lesssim -\nu (p)+{\left(p^0\right)}^{a \over 2} \int_{|q|<B} \left(q^0\right)^{a \over 2} \sqrt{J(q)}|f^n (s_1,y+\hat{p} s_1,q)| \,dq\\ \hspace{20mm}+\widetilde{M} \left(p^0\right)^{a \over 2}\int_{|q|>B} \left(q^0\right)^{a \over 2}\sqrt{J(q)} \,dq.
\end{gathered}
\end{eqnarray}
Above the third term on the r.h.s. is bounded by $\nu(p) \over 4$ if $B$ is large enough. For the second term we use (3.2) to get
\begin{equation}
\begin{gathered}
\hspace{-0mm}\int_{|q|<B}  \left|f^n \left(s_1,y+\hat{p} s_1,q \right)\right| \,dq \leq \int_{|q|<B}\left| f_0 \left(y+ \hat{p}s_1-\hat{q}s_1,q \right) \right| \,dq\\+\int_0^{s_1} \int_{|q|<B} \left|K\left(\left|\left|f^{n-1}\right|\right|_{L^\infty_x}\right)\left(s_2,q \right) \right|\,dq\,d s_2\\+\int_0^{s_1} \int_{|q|<B} \left|\Gamma_{gain}\left(\left|\left|f^{n-1} \right|\right|_{L^\infty_x},\left|\left|f^{n-1} \right|\right|_{L^\infty_x} \right)\left(s_2,q \right) \right| \,dq\,d s_2
\end{gathered}
\end{equation}
We denote the second term and third term on the r.h.s. of (3.5) by $H_1$ and $H_2,$ respectively. For $H_1$ we split $K=K^\chi +K^{1-\chi}$. Then for $l>3,$ from Lemmas 2.3 and 2.4, we have
\begin{equation}
H_1 \leq C\widetilde{M} s_1 \int_{\mathbb{R}^3} w_{-l}(q_1) \int_{\mathbb{R}^3} k^{\chi}(q,q_1)\,dq \,dq_1+C\widetilde{M} s_1 \int_{\mathbb{R}^3} e^{-cq_0}\,dq \leq C\widetilde{M}T^\star. \nonumber
\end{equation}
Next we estimate $H_2$. Notice that
\begin{equation}
H_2\leq \widetilde{M}^2 s_1  \int_{|q|<B} \int_{\mathbb{R}^3} \int_{\mathbb{S}^2} v_{\phi} (q,q_1)\sigma(g(q,q_1),\theta)\sqrt{J(q_1)}w_{-l}(q')w_{-l}(q'_1)\,d\omega\,dq_1\,dq,  \nonumber
\end{equation}
and using $q'^0+{q'_1}^0=q^0+q_1^0$ that
\begin{equation}
w_l (q')w_l (q_1')=\left({q'}^0 {q'_1}^0\right)^l \gtrsim(q'^0+{q'_1}^0)^l\geq (q^0)^l=w_l(q). \nonumber
\end{equation}
Then for $l > 3+a/2$ we have 
\begin{equation}
\begin{gathered}
H_2 \leq C\widetilde{M}^2 s_1  \int_{|q|<B} w_{-l}(q)\left\{\int_{\mathbb{R}^3} \int_{\mathbb{S}^2} v_{\phi} (q,q_1)\sigma(g(q,q_1),\theta)\sqrt{J(q_1)}\,d\omega\,dq_1\right\}\,dq\\ \leq C\widetilde{M}^2 s_1 \int_{\mathbb{R}^3}w_{a/2-l}(q)\,dq\leq C\widetilde{M}^2 T^\star. \nonumber
\end{gathered}
\end{equation}
If $T^\star$ is sufficiently small, then we obtain $$\sup_{0\leq s_1 \leq T^\star} \int_{|q|<B} |f^n (s_1,y+\hat{p} s_1,q)| \leq 2 \epsilon,$$
and then $-B^ n(s_1,y+\hat{p}s_1,p) \leq -c\nu(p)$ when $0 \leq s_1 \leq T^\star$ and $\epsilon$ is small. 

We apply the last inequality to the third term of (3.2). Then
\begin{equation}
\begin{gathered}
w_{l}(p)\left|f^{n+1} (t,x,p)\right|\leq w_{l}(p)| f_0 (y,p)|+\int_0^t w_l (p)\left|K(f^n)(s,y+\hat{p}s,p)\right|\,ds\\+\int_0^t w_l (p) e^{-c\nu(p){(t-s)}}\left|\Gamma_{gain} (f^n,f^n)(s,y+\hat{p}s,p)\right|\,ds.
\end{gathered}
\end{equation}
Now we estimate the $L^\infty_p \left(L^2_x \cap L^\infty_x\right)$ norm of $w_l f^{n+1}$. First, as in the estimate for $H_1$ we have
$$\left|\left|\int_0^t w_l (p) K \left(f^n \right) (s,y+\hat{p}s,p)\,ds \right|\right|_{L^2_x \cap L^\infty_x} \leq C T^\star \left|\left|w_l f^n (s)\right|\right|_{L^\infty_q (L^2_x \cap L^\infty_x)}.$$
For the last term of (3.6), for $l >14+a/2$ we can obtain
\begin{equation}
\begin{gathered}
\hspace{-45mm}w_l (p)\left|\left|\Gamma_{gain} (f^n,f^n)\right|\right|_{L^2_x \cap L^{\infty}_x} (s,p)\\ \hspace{25mm}\lesssim \left\{\left(p^0\right)^{- {b \over 2}}+\int_0^t e^{-c\nu(p)(t-s)}A(p)\,ds\right\}\left|\left|w_l f^n (s)\right|\right|^2_{L^\infty_q (L^2_x \cap L^\infty_x)},
\end{gathered}
\end{equation}
where
\begin{equation}
\label{la1}
  A(p)=\begin{cases}
      \left(p^0\right)^{-1+{a \over 2}}& \text{if $\gamma \geq 0$}, \\
      \left(p^0\right)^{-1+{a \over 2}}+\left(p^0\right)^{{1 \over 2}(a+|\gamma|-2)+O(\delta)}& \text{if $-2<\gamma<0,\quad \forall \delta >0$.}
    \end{cases}
\end{equation}
For simplicity, we omit the proof of (3.7) (we refer to the proof of Theorem 3.1 in [6]). 

Let $N \gg 1.$ For $\gamma \geq0,$ we have
\begin{equation*}
\begin{gathered}
\hspace{-45mm}\int_0^t e^{-c\nu(p)(t-s)} A(p)\left\{\chi_{p^0\leq N}+\chi_{p^0\geq N}\right\}\,ds\\ \hspace{10mm}\lesssim N^{{a \over 2}}\int_0^t e^{cN^{a \over 2}s} ds+N^{-1}\int_0^t e^{-c\nu(p)(t-s)} \nu(p) ds\hspace{0mm}\lesssim N^{{a \over 2}}t+N^{-1},
\end{gathered}
\end{equation*}
where $\chi_{p^0 \leq N}$ is the characteristic function of $\{p:p^0\leq N\}$ and $\chi_{p^0\geq N}=1-\chi_{p^0 \leq N}.$
For $\gamma<0,$ we choose $\delta=\delta(\gamma)$ small so that $|\gamma|-2+O(\delta)<0.$ The same estimate as the above case yields 
$$\int_0^t e^{-c\nu(p)(t-s)} A(p) \,ds \lesssim N^{{a \over 2}}t+N^{-1}+N^{|\gamma|-2+O(\delta)}.$$

\noindent{Hence }
\begin{eqnarray*}
\begin{gathered}
\hspace{-55mm}\sup_{0\leq t\leq T^\star} ||w_{l} f^{n+1} (t)||_{L^\infty_p (L^2_x \cap L^{\infty}_x)} \hspace{0mm}\\ \leq ||w_{l} f_0||_{L^\infty_p (L^2_x \cap L^\infty_x)}+CT^\star \sup_{0\leq t\leq T} ||w_{l} f^{n} (t)||_{L^\infty_p (L^2_x \cap L^\infty_x)} \hspace{0mm}\\+CT^\star N^{{a \over 2}}\sup_{0\leq t\leq T^\star} ||w_{l} f^{n} (t)||_{L^\infty_p (L^2_x \cap L^\infty_x)}^2 \\ \hspace{22mm}+C\left[N^{-1}+N^{|\gamma|-2+O(\delta)}\right]\sup_{0\leq t \leq T^\star}||w_{l} f^{n} (t)||_{L^\infty_p (L^2_x \cap L^{\infty}_x)}^2 \\ \hspace{6mm} \leq {\widetilde{M} \over 2}+C\left[\widetilde{M}+\widetilde{M}^2\right]N^{a \over 2} T^\star+C\widetilde{M}^2 \left[N^{-1}+N^{|\gamma|-2+O(\delta)}\right].
\end{gathered}
\end{eqnarray*}
Above, by first choosing $N$ large, and second choosing $T^\star$ sufficiently small, we obtain $$\sup_{0\leq t\leq T^\star} ||w_{l} f^{n+1} (t)||_{L^\infty_p(L^2_x \cap L^\infty_x)} \leq \widetilde{M}.$$

For the remaining assertions we refer to the proof of Theorem 3.1 in [6].
\end{proof}
\noindent {\it{Remark.}} From (\ref{la1}) we may remove the assumption (\ref{ls}) for the case $a\in [0,2] \cap [0,2+\gamma).$
\begin{center}{
\section{Global existence}
}\end{center}
For the local solution $f(t,x,p)$ constructed in Theorem 1.5 or Theorem 3.1, we define 
\begin{equation}
h(t,x,p)=h_l (t,x,p)=w_l (p) f(t,x,p).\nonumber
\end{equation}
The mild form for $h(t,x,p)$ is given by
\begin{equation}
\label{inth}
\begin{gathered}
\hspace{-60mm}h(t,x,p)=e^{-\nu(p)t}h_0 \left(x-\hat{p}t,p\right)\\+\int_0^t e^{-\nu(p)(t-s)} K_l (h) \left(s,x-\hat{p}(t-s),p\right)\,ds\\+\int_0^t e^{-\nu(p)(t-s)}\Gamma_l  (h,h) \left(s,x-\hat{p}(t-s),p\right)\,ds.
\end{gathered}
\end{equation}
Here $h_0 (x,p)=w_l (p) f_0,$ and
\begin{equation*}
K_l (h)=w_l K\left(h \over w_l \right),\quad \Gamma_l (h,h)=w_l \Gamma \left({h \over w_l},{h \over w_l}\right).
\end{equation*}
Similarly for $K^{\chi}_l,\,K^{1-\chi}_l.$ We also define $$k_l(p,q)=k(p,q) {w_l (p) \over w_l (q)},\quad k^{\chi}_l(p,q)=k^{\chi}(p,q) {w_l (p) \over w_l (q)}.$$
From (\ref{inth}), we have
\begin{equation}
\begin{gathered}
\left|\left|h \right|\right|_{L^\infty_x} (t,p)\leq e^{-\nu(p) t}\left|\left|h_0 \right|\right|_{L^\infty_x} (p)+ \int_0^t e^{-\nu(p)(t-s)} \left|K_l (\left|\left|h \right|\right|_{L^\infty_x})(s,p)\right|\,ds\\+ \int_0^t e^{-\nu(p)(t-s)} \left|\Gamma_l \left(\left|\left|h \right|\right|_{L^\infty_x},\left|\left|h \right|\right|_{L^\infty_x}\right)(s,p)\right|\,ds
\end{gathered}
\end{equation}
Above the second term on the r.h.s. is split into
\begin{equation}
\begin{gathered}
\hspace{-20mm}\int_0^t e^{-\nu(p)(t-s)} \int_{\mathbb{R}^3}\left| k^\chi_l \left(p,q \right) \right| \left|\left|h \right|\right|_{L^\infty_x}(s,q)\,dq\,ds\\ \hspace{20mm}+\int_0^t e^{-\nu(p)(t-s)} w_l (p) \left|K^{1-\chi}_l \left({\left|\left|h \right|\right|_{L^\infty_x}}\right)(s,p) \right|\,ds.
\end{gathered}
\end{equation}
Similarly, we also have using Minkowski's inequality and an interpolation
\begin{equation}
\begin{gathered}
\hspace{-50mm}\left|\left|h \right|\right|_{L^2_x} (t,p)\leq e^{-\nu(p) t}\left|\left|h_0 \right|\right|_{L^2_x} (p)\\ \hspace{11mm}+\int_0^t e^{-\nu(p)(t-s)} \int_{\mathbb{R}^3} \left|k^\chi_l \left(p,q \right)\right|\left|\left|h \right|\right|_{L^2_x}(s,q)\,dq\,ds\\ \hspace{-1.5mm}+\int_0^t e^{-\nu(p)(t-s)}\left|K^{1-\chi}_l \left(\left|\left|h \right|\right|_{L^2_x}\right)(s,p)\right|\,ds \\\hspace{-5.5mm}+ \int_0^t e^{-\nu(p)(t-s)} \left|\left|\Gamma_l \left(h,h \right)(s,p)\right|\right|_{L^1_x}\,ds\\ \hspace{-4mm}+ \int_0^t e^{-\nu(p)(t-s)} \left|\left|\Gamma_l \left(h,h \right)(s,p)\right|\right|_{L^\infty_x}\,ds.
\end{gathered}
\end{equation}

Notice that for hard potentials,
\begin{equation}
\int_0^t e^{-\nu(p)(t-s)}\nu^i (p) \,ds\leq 2, \quad i=0,1. \nonumber
\end{equation}
We will use the following integral.
\begin{prop} Let $l \geq 0$ and $\zeta=\min\left\{2\zeta_a ,2\zeta_b+ {{a+b} \over 2}\right\}>0.$
\begin{equation}
\int_{\mathbb{R}^3} \left|k^{\chi}_l \left(p,q\right)\right|\,d q\leq C_\chi \nu(p) (p^0)^{-\zeta},
\end{equation}
\begin{equation}
\int_{\mathbb{R}^3} \left| k(p,q)\right| e^{-cq^0}\,d q \leq C.
\end{equation}
\end{prop}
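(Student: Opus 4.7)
My plan is to treat the two estimates separately, as (4.4) requires tracking the exact power of $p^0$ that comes out while (4.5) only needs a uniform bound, so the tools differ slightly.

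For (4.4), I would start from the pointwise bound in Lemma 2.3,
\[
|k^\chi(p,q)| \leq C_\chi\bigl\{(p^0 q^0)^{-\zeta_a+a/4} + (p^0 q^0)^{-\zeta_b-b/4}\bigr\}e^{-c|p-q|},
\]
and multiply by $w_l(p)/w_l(q) = (p^0/q^0)^l$ to obtain $|k^\chi_l(p,q)|$. I would then split the $q$-integral into the near region $R_1 = \{|p-q|\leq p^0/2\}$ and the far region $R_2 = \{|p-q|\geq p^0/2\}$. On $R_1$, one has $q^0 \approx p^0$, so the weight ratio $(p^0/q^0)^l$ is bounded by a constant, and the $q$-integral of $e^{-c|p-q|}$ over $R_1$ is $O(1)$; the first term then contributes $(p^0)^{-2\zeta_a+a/2}$ and the second contributes $(p^0)^{-2\zeta_b-b/2}$. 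On $R_2$, use $e^{-c|p-q|}\leq e^{-cp^0/4}e^{-c|p-q|/2}$, and absorb the polynomial weights into the remaining Gaussian factor; this piece decays exponentially in $p^0$ and is therefore negligible compared to the desired polynomial bound. Writing $a/2 - 2\zeta_a = a/2 - 2\zeta_a$ and $a/2 - 2\zeta_b - (a+b)/2 = -2\zeta_b - b/2$ and using $\nu(p)\approx (p^0)^{a/2}$ from Lemma 2.1, the two contributions combine to $C_\chi\nu(p)(p^0)^{-\zeta}$ with $\zeta = \min\{2\zeta_a,\,2\zeta_b+(a+b)/2\}$, which matches the definition.

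For (4.5), I would decompose $k = k^a + k^b$ and use Lemma 2.2. In every one of the cases listed there the pointwise upper bound has the form $(p^0 q^0)^\alpha \, g^\beta\, e^{-c|p-q|}$ for some exponents $\alpha,\beta$ (with $\beta\in\{0,\gamma-1,a-1,-|\gamma|-1,-b-1\}$). Multiplying by $e^{-cq^0}$ and using $e^{-c|p-q|}e^{-cq^0/2} \leq e^{-c'p^0}$ (since $|p-q|+q^0 \gtrsim p^0$), one extracts an exponential decay in $p^0$ that dominates any polynomial factor $(p^0)^\alpha$, producing a uniform-in-$p$ bound. The remaining $q$-integral
\[
\int_{\mathbb{R}^3} (q^0)^{\alpha}\,g(p,q)^\beta\, e^{-c|p-q|}e^{-cq^0/2}\, dq
\]
is then controlled by combining the half of $e^{-cq^0}$ with the exponential in $|p-q|$; the potentially singular factors $g^\beta$ with $\beta<0$ are integrable near the small-relative-momentum set since $\beta > -3$ in all listed cases (as $\gamma>-2$, $b<4$, $a\geq 0$).

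The main obstacle I anticipate is the bookkeeping in (4.4): one must verify case by case that the same two exponents $-2\zeta_a$ and $-2\zeta_b - b/2$ emerge regardless of the sign of $\gamma$ and the range of $a,b$, so that $\zeta$ really is positive and given by the stated formula. For (4.5), the only delicate point is the integrability of $g^\beta$ across the singular set; once combined with $e^{-cq^0/2}$ and the exponential in $|p-q|$, a change of variables centered at $p$ or a standard bound of the form $\int g(p,q)^\beta e^{-c|p-q|}dq \leq C$ for $\beta > -3$ finishes the estimate.
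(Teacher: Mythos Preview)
Your proposal is correct and follows essentially the same route as the paper. The paper's proof is extremely terse: it records the single pointwise inequality
\[
(p^0 q^0)^\alpha e^{-c|p-q|} \lesssim (p^0)^{2\alpha} e^{-\frac{c}{2}|p-q|}
\]
(this is (4.7), which your near/far split $|p-q|\lessgtr p^0/2$ is precisely a proof of), together with the two-sided bound
\[
\frac{\{|p\times q|^2+|p-q|^2\}^{1/2}}{\sqrt{p^0 q^0}} \le g(p,q) \le |p-q|,
\]
and then simply says ``by calculus'' using Lemmas 2.2 and 2.3. So your decomposition into near and far regions is just an unpacked version of (4.7), and your treatment of $g^\beta$ for $\beta<0$ implicitly relies on the lower bound $g \gtrsim |p-q|/\sqrt{p^0 q^0}$ that the paper states explicitly; with that in hand your claim $\int g^\beta e^{-c|p-q|}e^{-cq^0/2}\,dq \le C$ for $\beta>-3$ goes through exactly as you outline.
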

\begin{proof}
It is easy to verify that 
\begin{equation}
\left(p^0 q^0 \right)^\alpha e^{-c|p-q|}\lesssim \left(p^0 \right)^{2\alpha} e^{-{c \over 2} |p-q|},
\end{equation}
for $\alpha \in \mathbb{R}^3.$ Moreover, it is well known that
$${{\left\{|p \times q|^2 +|p-q|^2 \right\}^{1 \over 2}} \over {\sqrt{p^0 q^0}}}\leq g(p,q) \leq |p-q|.$$
See for instance [6].
We use Lemma 2.2 and Lemma 2.3. By calculus, we can get (4.5) and (4.6).
\end{proof}

We will prove the following estimate by using Lemmas \ref{lemg1} and \ref{lemg2}. For simplicity, we define $\mathcal{E}_0=(\left|M_0\right|+\left|E_0\right|+\left|H_0\right|)^k$ when $\mathcal{E}_0 \ll 1.$ Here $k={{1 \over 10}\left(1-{1 \over d}\right)}$ and $d$ is given in Lemma 2.6.
\begin{lem} Let $l >14+{a \over 2}.$ There exists a constant $C^\star>0$ such that
\begin{equation}
\begin{gathered}
\hspace{0mm}\sup_{0 \leq s \leq t} \left|\left|h(s) \right|\right|_{L^\infty_{p} \left(L^2_x \cap L^\infty_x \right)} \leq C^\star M^2+C^\star\mathcal{E}_0 \left\{1+\sup_{0 \leq s \leq t} \left|\left|h(s) \right|\right|^{1 \over 2}_{L^\infty_{x,p}}\right\}\\+C^\star \sup_{T^\star \leq s \leq t} \left\{ \left|\left|h(s) \right|\right|^{{9d+1}\over 5d}_{L^\infty_{x,p}} \sup_{y \in \mathbb{R}^3} \left(\int_{\mathbb{R}^3} \left|f(s,y,q)\right|\,dq\right)^{{d-1} \over 5d}\right\}\\+C^\star\sup_{T^\star \leq s \leq t} \left\{ \left|\left|h(s) \right|\right|^{{9d+1}\over 5d}_{L^\infty_{p} L^2_x} \sup_{q \in \mathbb{R}^3} \left(\int_{\mathbb{R}^3} \left|\left|f\right|\right|_{L^2_x} (s,q)\,dq\right)^{{d-1} \over 5d}\right\}.\nonumber
\end{gathered}
\end{equation}
\begin{proof}
From Lemma 4.2 in [6],
\begin{equation}
\begin{gathered}
\hspace{-25mm}\sup_{0 \leq s \leq t} \left|\left|h(s) \right|\right|_{L^\infty_{x,p}} \lesssim M^2+\mathcal{E}_0\\ \hspace{25mm}+\sup_{T^\star \leq s \leq t} \left\{ \left|\left|h(s) \right|\right|^{{9d+1}\over 5d}_{L^\infty_{x,p}} \sup_{y \in \mathbb{R}^3} \left(\int_{\mathbb{R}^3} \left|f(s,y,q)\right|\,dq\right)^{{d-1} \over 5d}\right\}.\nonumber
\end{gathered}
\end{equation}
We need only to show that
\begin{equation}
\begin{gathered}
\hspace{-15mm}\sup_{0 \leq s \leq t} \left|\left|h(s) \right|\right|_{L^\infty_{p} L^2_x} \lesssim M^2+\mathcal{E}_0 \left\{1+\sup_{0 \leq s \leq t} \left|\left|h(s) \right|\right|^{1 \over 2}_{L^\infty_{x,p}}\right\} \\ \hspace{15mm}+\sup_{T^\star \leq s \leq t} \left\{ \left|\left|h(s) \right|\right|^{{9d+1}\over 5d}_{L^\infty_{p} L^2_x} \sup_{q \in \mathbb{R}^3} \left(\int_{\mathbb{R}^3} \left|\left|f\right|\right|_{L^2_x} (s,q)\,dq\right)^{{d-1} \over 5d}\right\},
\end{gathered}
\end{equation}
Now we estimate the r.h.s. of (4.4). First, again using (4.4), the second term is split into
\begin{equation}
\begin{gathered}
\int_0^t e^{-\nu(p)(t-s)}\int_{\mathbb{R}^3} \left|k^{\chi}_l \left(p,q\right)\right|e^{-\nu(q)s} \left|\left|h_0\right|\right|_{L^2_x} (q)\,dq\,ds
\\+\int_0^t e^{-\nu(p)(t-s)} \int_{\mathbb{R}^3} \left|k^\chi_l \left(p,q\right)\right| \\ \times \left\{\int_0^s e^{-\nu(q)(s-s_1)} \int_{\mathbb{R}^3} \left|k^\chi_l \left(q,q_1\right)\right|\, \left|\left|h\right|\right|_{L^2_x} (s_1 , q_1)\,d q_1\,d s_1\right\}\,d q\,d s\\+\int_0^t e^{-\nu(p)(t-s)} \int_{\mathbb{R}^3} \left|k^\chi_l (p,q)\right|\\ \times \left\{ \int_0^s e^{-\nu(q)(s-s_1)} \left| K^{1-\chi}_l \left(\left|\left|h\right|\right|_{L^2_x}\right)(s_1,q) \right|\,d s_1\right\}\,d q\,d s\\+\int_0^t e^{-\nu(p)(t-s)} \int_{\mathbb{R}^3} \left|k^\chi_l \left(p,q\right)\right| \\ \times \left\{\int_0^s e^{-\nu(q)(s-s_1)} \left|\left|\Gamma_l \left(h,h \right) \right|\right|_{L^1_x} (s_1,q)\,d s_1\right\}\,d q\,d s\\+\int_0^t e^{-\nu(p)(t-s)} \int_{\mathbb{R}^3} \left|k^\chi_l \left(p,q\right)\right| \\ \times \left\{\int_0^s e^{-\nu(q)(s-s_1)} \left|\left|\Gamma_l \left(h ,h\right) \right|\right|_{L^\infty_x} (s_1,q)\,d s_1\right\}\,d q\,d s.
\end{gathered}
\end{equation}

Clearly, the first term of (4.9) is bounded by
\begin{equation}
\begin{gathered}
C_\chi \left|\left|h_0\right|\right|_{L^\infty_p L^2_x}. \nonumber
\end{gathered}
\end{equation}
By Lemma 2.5, for any $\eta>0$ the third term of (4.9) is bounded by
\begin{equation}
\begin{gathered}
\int_0^t e^{-\nu(p)(t-s)}  \int_0^s e^{\nu(q)(s-s_1)} \int_{\mathbb{R}^3} \left|k_l \left(p,q\right)\right| e^{-cq^0}\left|\left|h\right|\right|_{L^\infty_{q_1} L^2_x}(s_1)\,d q\,ds_1\,ds\\ \leq C \eta \sup_{0 \leq s \leq t} \left|\left|h(s) \right|\right|_{L^\infty_{q_1} L^2_x},\nonumber
\end{gathered}
\end{equation}
By using Lemma 2.6, the last two terms of (4.9) is bounded by
\begin{equation}
\begin{gathered}
C_\chi \sup_{0 \leq s \leq t} \left\{ \left|\left|h(s) \right|\right|^{{9d+1}\over 5d}_{L^\infty_{p} L^2_x} \left(\int_{\mathbb{R}^3} \left|\left|f\right|\right|_{L^2_x} (s,q)\,dq\right)^{{d-1} \over 5d}\right\}\\+C_\chi \sup_{0 \leq s \leq t} \left\{||w_l f(s)||^{{9d+1} \over 5d}_{L^{\infty}_{x,p}} \sup_{y \in \mathbb{R}^3} \left(\int_{\mathbb{R}^3} |f(s,y,q)|\,dq\right)^{{d-1} \over 5d}\right\}.\nonumber
\end{gathered}
\end{equation}
If $l>3$ then Theorem 3.1 yields
\begin{equation}
\begin{gathered}
\sup_{0 \leq s \leq T^\star} \left\{ \left|\left|h(s) \right|\right|^{{9d+1}\over 5d}_{L^\infty_{p} L^2_x} \left(\int_{\mathbb{R}^3} \left|\left|f\right|\right|_{L^2_x} (s,q)\,dq\right)^{{d-1} \over 5d}\right\}\\
\leq \sup_{0 \leq s \leq T^\star} \left\{ \left|\left|h(s) \right|\right|^{2}_{L^\infty_{p} L^2_x} \left(\int_{\mathbb{R}^3}  w_{-l}(q)\,dq\right)^{{d-1} \over 5d}\right\} \\ \leq C \left|\left|h_0 \right|\right|^2_{L^\infty_p L^2_x}
\nonumber
\end{gathered}
\end{equation}
Similarly, we have 
\begin{equation}
\begin{gathered}
\sup_{0 \leq s \leq T^\star} \left\{ \left|\left|h(s) \right|\right|^{{9d+1}\over 5d}_{L^\infty_{x,p}} \sup_{y \in \mathbb{R}^3} \left(\int_{\mathbb{R}^3} \left|f(s,y,q)\right| \,dq\right)^{{d-1} \over 5d}\right\} \\ \leq C \left|\left|h_0 \right|\right|^2_{L^\infty_{x,p}}.\nonumber
\end{gathered}
\end{equation}
Also for the other terms of (4.4), we can get estimates similar to the estimates for each term of (4.9).

Next, we estimate the second term of (4.9), which is split into the following three cases.

Case 1. For $|p| \geq N.$ By (4.5) the second term is bounded by
\begin{equation}
\begin{gathered}
{C_\chi {N^{-\zeta}}} \int_0^t e^{-\nu(p)(t-s)} \nu(p) \int_0^s e^{-\nu(q)(s-s_1)}\nu(q)\left(q^0\right)^{-\zeta}\left|\left|h(s_1)\right|\right|_{L^\infty_p L^2_x}\,ds_1\,ds\\ \leq C_\chi N^{-\zeta} \sup_{0 \leq s \leq t} \left|\left|h(s)\right|\right|_{L^\infty_p L^2_x}. \nonumber
\end{gathered}
\end{equation}

Case 2. For $|p| \leq N,\,|q|\geq 2N,$ or $|q| \leq 2N,\,|q_1| \geq 3N.$ Then $|p-q| \geq N$, and then use Lemma 2.3 to get 
\begin{equation}
\begin{gathered}
\left|k^\chi_l \left(p,q \right)\right| \leq e^{-{c \over 2}N}\left|k^\chi_l (p,q)\right| e^{{c \over 2}|p-q|} \leq C_\chi e^{-{c \over 2} N} e^{-{c \over 2}{{|p-q|}}},
\nonumber
\end{gathered}
\end{equation}
and similarly,
\begin{equation}
\begin{gathered}
\left|k^\chi_l \left(q,q_1 \right)\right| \leq C_\chi e^{-{c \over 2} N} e^{-{c \over 2}{{|q-q_1|}}}.\nonumber
\end{gathered}
\end{equation}
We have
\begin{equation}
\begin{gathered}
\int_0^t e^{-\nu(p)(t-s)} \int_{\mathbb{R}^3} \left|k^\chi_l \left(p,q\right)\right| \int_0^s e^{-\nu(q)(s-s_1)} \int_{\mathbb{R}^3} \left|k^\chi_l \left(q,q_1\right)\right|\, \\ \times \left[\chi_{|p| \leq N} \chi_{|q| \geq 2N}+\chi_{|q| \leq 2N} \chi_{|q_1| \geq 3N}\right]\left|\left|h\right|\right|_{L^2_x} (s_1 , q_1)\,d q_1\,d s_1\,d q\,d s\\ \leq C_{\chi} e^{-{c \over 2}N} \sup_{0 \leq s \leq t}\left|\left|h (s)\right|\right|_{L^\infty_p L^2_x}.\nonumber
\end{gathered}
\end{equation}
where $\chi_A$ is the characteristic function of $A$.

Case 3. For $|p| \leq N,\,|q| \leq 2N,\, |q_1| \leq 3N.$ 
The remaining part is
\begin{equation}
\begin{gathered}
\int_0^t e^{-\nu(p)(t-s)} \int_{|q| \leq 2N} \left|k^\chi_l \left(p,q\right)\right| \int_0^{s} e^{-\nu(q)(s-s_1)} \int_{|q_1| \leq 3N} \left|k^\chi_l \left(q,q_1\right)\right|\, \\ \times\,\, \chi_{|p| \leq N} \left|\left|h\right|\right|_{L^2_x} (s_1 , q_1)\,d q_1\,d s_1\,d q\,d s. 
\end{gathered}
\end{equation}
Since $k^\chi_l \left(p,q\right)$ is bounded, the above is bounded by
\begin{equation}
\begin{gathered}
C_{\chi,N} \int_0^t  e^{-c(t-s)} \int_0^{s} e^{-c(s-s_1)}\int_{|q_1| \leq 3N} \left|\left|h\right|\right|_{L^2_x} (s_1 , q_1)\,d q_1\,d s_1 \, d s,\nonumber
\end{gathered}
\end{equation}
Moreover, we have
\begin{equation}
\begin{gathered}
\left[\int_{|q_1| \leq 3N} \left|\left|h\right|\right|_{L^2_x} (s_1 , q_1)\,d q_1\right]^2 \leq C_N \int_{|q_1| \leq 3N} \int_{\mathbb{R}^3} \left|h(s_1,y,q_1)\right|^2\,d y \,d q_1\\ \leq C_N \int_{|q_1| \leq 3N} \int_{\mathbb{R}^3} \left| f(s_1,y,q_1)\right|^2 \chi_{\left\{\left|f(s_1,y,q_1)\right| \leq \sqrt{J(q_1)}\right\}}\,d y\,d q_1\\+C_N \left|\left|h(s_1)\right|\right|_{L^\infty_{x,p}}\int_{|q_1| \leq 3N} \int_{\mathbb{R}^3}\sqrt{J(q_1)}\left| f(s_1,y,q_1)\right|\chi_{\left\{\left|f(s_1,y,q_1)\right| \geq \sqrt{J(q_1)}\right\}}\,dy\,dq_1.
\end{gathered}
\end{equation}
Applying Lemma 2.5 to the above, (4.10) is bounded by
\begin{equation}
\begin{gathered}
C_{\chi,N} \,\mathcal{E}_0\left\{1+\sqrt{ \sup_{0 \leq s \leq t} \left|\left|h(s)\right|\right|_{L^\infty_{x,p}}}\right\}.\nonumber
\end{gathered}
\end{equation}
We obtain the desired estimate (4.8), for the above terms by first choosing $\eta$ small, and second choosing $N$ large.
\end{proof}
\end{lem}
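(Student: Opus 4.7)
The plan is to handle the $L^\infty_{x,p}$ and $L^\infty_p L^2_x$ pieces separately. The $L^\infty_{x,p}$ part is quoted verbatim from Lemma 4.2 of [6], so the substantive work is the $L^\infty_p L^2_x$ bound. My starting point is the Minkowski-derived mild inequality (4.4). I would iterate: substitute (4.4) itself for $\|h\|_{L^2_x}(s_1,q)$ inside the $\int |k^\chi_l(p,q)|\|h\|_{L^2_x}(s,q)\,dq$ term. This produces five nested pieces corresponding to the initial datum, a doubly iterated $k^\chi_l$ kernel, a $k^\chi_l$-then-$K^{1-\chi}_l$ piece, and two $k^\chi_l$-then-$\Gamma_l$ pieces (one in $L^1_x$, one in $L^\infty_x$), plus the first-level $K^{1-\chi}_l$ and $\Gamma_l$ contributions handled analogously.

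Four of these pieces are routine. The initial-data piece gives $C_\chi\|h_0\|_{L^\infty_p L^2_x}\lesssim M$. The $K^{1-\chi}_l$ piece is controlled by Lemma 2.4 with its arbitrarily small prefactor $\eta$ and absorbed into the left-hand side. The two $\Gamma_l$ pieces are split at $s=T^\star$: on $[0,T^\star]$ the local-existence Theorem 3.1 reduces the cubic quantity to $\lesssim \|h_0\|^2\lesssim M^2$ once $l>3$, while on $[T^\star,t]$ the trilinear estimates of Lemma 2.6 produce precisely the cubic terms in the statement, with exponent $(d-1)/(5d)$ on the $L^1_q$ integrals of $\|f\|_{L^2_x}$ and of $f$.

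The main obstacle is the doubly iterated $k^\chi_l$ term. I would treat it by a three-case decomposition of the $(p,q,q_1)$ domain in terms of a large parameter $N$. When $|p|\geq N$, applying Proposition 4.1 twice yields a factor $(p^0)^{-\zeta}\leq N^{-\zeta}$ that is absorbable into the LHS. When $|p-q|\geq N$ or $|q-q_1|\geq N$, the exponential $e^{-c|p-q|}$ from Lemma 2.3 gives smallness $e^{-cN/2}$, again absorbable. In the remaining bounded region $|p|\leq N$, $|q|\leq 2N$, $|q_1|\leq 3N$, the kernel $k^\chi_l$ is bounded by $C_{\chi,N}$, so after Cauchy--Schwarz the task reduces to controlling $\int_{|q_1|\leq 3N}\|h(s_1,\cdot,q_1)\|_{L^2_x}\,dq_1$. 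Squaring this integral and splitting the integrand by $\{|f|\leq\sqrt J\}$ versus $\{|f|>\sqrt J\}$ gives a first part bounded by the integral of $|f|^2/J$ over $x,p$ and a second bounded by $\|h\|_{L^\infty_{x,p}}$ times the integral of $\sqrt J\,|f|$; Lemma 2.5 controls both by $\mathcal{E}_0$, producing the factor $\mathcal{E}_0(1+\|h\|_{L^\infty_{x,p}}^{1/2})$ in the statement.

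The argument closes in the standard order: first choose $\eta$ small to absorb the $K^{1-\chi}_l$ contribution; then choose $N$ large to absorb Cases 1 and 2 of the doubly-$k^\chi_l$ term; finally keep Case 3 and the $\Gamma_l$ contributions on the RHS, yielding the stated bound. The subtle point I anticipate is getting the exponent $1/2$ on $\|h\|_{L^\infty_{x,p}}$ correctly: it arises because Lemma 2.5 only controls the large-$|f|$ part in $L^1$, which is then converted to an $L^2$-type bound by extracting a factor of $\|h\|_{L^\infty_{x,p}}$ and taking a square root after the Cauchy--Schwarz step.
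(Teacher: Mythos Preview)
Your proposal is correct and follows essentially the same approach as the paper's own proof: the same iteration of (4.4), the same five-term decomposition, the same three-case $(p,q,q_1)$ splitting of the doubly iterated $k^\chi_l$ term via a large parameter $N$, and the same use of Lemma 2.5 after the $\{|f|\lessgtr\sqrt{J}\}$ split in the bounded region to produce the $\mathcal{E}_0(1+\|h\|_{L^\infty_{x,p}}^{1/2})$ term. Your identification of the origin of the exponent $1/2$ is also exactly what the paper does in (4.11).
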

Notice that, as in (4.11), for any $L>0$ we have
\begin{equation}
\begin{gathered}
\left|\left|h(s) \right|\right|^{{9d+1}\over 5d}_{L^\infty_{p} L^2_x}\left(\int_{\mathbb{R}^3} \left|\left|f\right|\right|_{L^2_x} (s,q)\,dq\right)^{{d-1} \over 5d}\\ \leq  \left|\left|h(s) \right|\right|^{{9d+1}\over 5d}_{L^\infty_{p} L^2_x} \left\{ \int_{|q| \geq L} +\int_{|q| \leq L} \left|\left|f \right|\right|_{L^2_x} (s,q) \,d q\right\}^{{d-1} \over 5d}\\ \leq C L^{{(-l+3)} \cdot {{d-1} \over 5d}} \sup_{0 \leq s \leq t} \left|\left|h(s) \right|\right|^{2}_{L^\infty_{p} L^2_x}+C_L \mathcal{E}_0\left\{1+\sup_{0 \leq s \leq t} \left|\left|h(s) \right|\right|^{{d-1} \over 10d}_{L^\infty_{x,p}}\right\}.
\end{gathered}
\end{equation}

We also need the following lemma.
\begin{lem}\cite{r6}. Let $l>5.$ For any $\eta>0,$ we have
\begin{equation}
\begin{gathered}
\int_{\mathbb{R}^3} \left| f\left(t,x,p\right)\right|\,dp \leq \int_{\mathbb{R}^3} e^{-\nu(p)t}\left| f_0\left(x-\hat{p}t,p\right)\right|\,dp\\+C\eta\sup_{0 \leq s \leq t} \left\{\left|\left|h(s)\right|\right|_{L^\infty_{x,p}}+\left|\left|h(s)\right|\right|^2_{L^\infty_{x,p}}\right\}\\+C_{\eta}\mathcal{E}_0 \left\{1+\sup_{0 \leq s \leq t}\left|\left|h(s)\right|\right|^{1+{1 \over d}}_{L^\infty_{x,p}}+\sup_{0 \leq s \leq t}\left|\left|h(s)\right|\right|^{1+{{9d+1} \over 5d}}_{L^\infty_{x,p}}\right\}.\nonumber
\end{gathered}
\end{equation}
\end{lem}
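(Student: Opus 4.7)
The plan is to start from the mild form (1.10) for $f(t,x,p)$, take absolute values, and integrate in $p$. The linear contribution $\int e^{-\nu(p)t}|f_0(x-\hat{p}t,p)|\,dp$ is retained as the first term on the RHS; the task then is to control the $K$-term and the $\Gamma$-term, each integrated against $\int\!\!\int_0^t e^{-\nu(p)(t-s)}\,ds\,dp$.

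For the $\Gamma$-term, the natural tool is Lemma 2.6. Using $|\Gamma(f,f)(s,y,p)|\le\|\Gamma(f,f)\|_{L^\infty_x}(s,p)$ together with formula (2.4), and the fact that $\int w_{-l}(p)\int_0^t\nu(p)e^{-\nu(p)(t-s)}\,ds\,dp<\infty$ for $l>3+a/2$, the $\Gamma$-contribution is bounded by $C\sup_s\{\|h(s)\|_{L^\infty_{x,p}}^{(9d+1)/(5d)}(\sup_y\!\int|f(s,y,q)|\,dq)^{(d-1)/(5d)}\}$. A crude insertion $\sup_y\int|f|\le C\|h\|_{L^\infty_{x,p}}$ (valid since $l>3$) produces the $C\eta\|h\|^2$ piece in the final bound; the $C_\eta\mathcal{E}_0\|h\|^{1+(9d+1)/(5d)}$ piece then comes from bootstrapping Lemma 4.3 against itself. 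The exponent $k=(d-1)/(10d)$ in $\mathcal{E}_0$ is tuned precisely so that the $\sqrt{|M_0|+|E_0|+|H_0|}$ produced by Cauchy--Schwarz on Lemma \ref{lemg1}, once raised to the $(d-1)/(5d)$ power from Lemma \ref{lemg2}, collapses to exactly one factor of $\mathcal{E}_0$.

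For the $K$-term, I would split $K=K^{\chi}+K^{1-\chi}$. The $K^{1-\chi}$ part is immediate from Lemma 2.4: $|K^{1-\chi}(f)(s,y_s,p)|\le\eta e^{-cp^0}\|h(s)\|_{L^\infty_{x,p}}$, so that after integrating one gets $C\eta\|h\|_{L^\infty_{x,p}}$. For $K^{\chi}$, split the $(p,q)$-integration into three regions. In $|p|\ge N$, Proposition 4.1 yields $\int|k^{\chi}(p,q)|\,dq\le C\nu(p)(p^0)^{-\zeta}$, contributing $CN^{-\zeta}\|h\|$. In $|p|\le N$, $|q|\ge 2N$, the factor $e^{-c|p-q|}\le e^{-cN/2}$ in $k^{\chi}$ (Lemma 2.3) produces exponential smallness in $N$. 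The bulk $|p|\le N$, $|q|\le 2N$ is the substantive case: $k^{\chi}$ is pointwise bounded, and I would write $|f|=|F-J|/\sqrt{J}$ and split on $\{|F-J|\le J\}$ versus its complement as in Lemma \ref{lemg1} — on the first set $|f|^2=|F-J|^2/J$, on the second $|f|\le C_N|F-J|$ for $|q|\le 2N$. Cauchy--Schwarz in $q$ reduces the task to controlling $\int_{|q|\le 2N}|f(s,y_s,q)|^2\,dq$ pointwise in $y_s=x-\hat p(t-s)$, which I would convert to an integrated form via the change of variable $p\mapsto z=x-\hat p(t-s)$ (with Jacobian $\lesssim C_N/(t-s)^3$ on $|p|\le N$), splitting the time integration into a small interval near $s=t$ (handled by the direct bound $|f|\le\|h\|/w_l$) and its complement (where the change of variable is valid). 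After applying Lemma \ref{lemg1}, this step yields a $C_N\mathcal{E}_0(1+\|h\|^{1+1/d})$ contribution, the intermediate exponent $1+1/d$ arising from an $L^2$-versus-$L^\infty$ interpolation.

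The main obstacle is the bulk $K^{\chi}$ estimate just described: transforming a pointwise-in-$y_s$ evaluation of $f$ into an integrated quantity that Lemma \ref{lemg1} can digest requires the change of variables $p\mapsto y_s$, and the associated Jacobian singularity at $s=t$ must be absorbed by splitting the time integral. Once all contributions are assembled, the claimed inequality follows by choosing $N$ large first (to kill the large-momentum pieces and the $|p-q|\ge cN$ cross-region) and then $\eta$ small in the application of Lemma 2.4.
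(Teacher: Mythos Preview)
The paper does not supply its own proof of this lemma; it is imported directly from Wang~\cite{r6}. Your outline follows the same template used there and visible in this paper's proof of Lemma~4.2: integrate the mild form over $p$, dispose of $K^{1-\chi}$ via Lemma~2.4, split $K^\chi$ into three momentum regions, and on the bounded region exploit the change of variable $p\mapsto y_s=x-\hat p(t-s)$ to convert the pointwise evaluation of $f$ at $y_s$ into a spatial integral to which Lemma~\ref{lemg1} applies; the $\Gamma$-term is routed through Lemma~\ref{lemg2} and closed by feeding the estimate back into itself. So your strategy is the correct one.

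Two steps in your sketch need tightening. First, after Cauchy--Schwarz in $q$ and the change of variable $p\mapsto y_s$ you are left with
\[
\frac{C_N}{(t-s)^3}\int_{|y_s-x|\le c(t-s)}\Bigl(\int_{|q|\le 2N}|f(s,y_s,q)|^2\,dq\Bigr)^{1/2}dy_s,
\]
and a \emph{second} Cauchy--Schwarz in $y_s$ over that ball is required before Lemma~\ref{lemg1} can be invoked; the factor $(t-s)^{3/2}$ it produces is what renders $\int e^{-c(t-s)}(t-s)^{-3/2}\,ds$ finite away from $s=t$. Second, the small factor $\eta$ in front of $\|h\|^2$ does not come from the ``crude insertion'' $\sup_y\int|f|\le C\|h\|$, which yields only $C\|h\|^2$. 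It appears after you substitute the lemma's own right-hand side back into the factor $\bigl(\sup_y\int|f|\bigr)^{(d-1)/(5d)}$ in the $\Gamma$-bound: the contribution $C\eta\|h\|$ already present there, raised to the $(d-1)/(5d)$ power and multiplied by $\|h\|^{(9d+1)/(5d)}$, gives $\eta^{(d-1)/(5d)}\|h\|^2$, which becomes $C\eta\|h\|^2$ upon relabelling. The exponent $1+1/d$ likewise emerges from tracking the H\"older split on $\{|F-J|\gtrless J\}$ through that substitution more carefully than the phrase ``an $L^2$-versus-$L^\infty$ interpolation'' conveys.
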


We are now ready to prove Theorem 1.1.
\begin{proof}[Proof of Theorem 1.1]
Assume that $t>T^\star$ and
\begin{equation}
\begin{gathered}
\sup_{0 \leq s \leq t} \left|\left|h(s)\right|\right|_{L^\infty_p \left(L^2_x \cap L^\infty_x \right)} \leq 2C^\star M^2 \equiv 2R_0,\nonumber
\end{gathered}
\end{equation}
where $C^\star>1$ is given in Lemma 4.2.
Then from Lemma 4.2 and (4.12) we have
\begin{equation}
\label{gg1}
\begin{gathered}
\sup_{0 \leq s \leq t} \left|\left|h(s)\right|\right|_{L^\infty_p \left(L^2_x \cap L^\infty_x \right)} \leq R_0+3R_0 \mathcal{E}_0 +4C {R_0}^2 L^{-{{d-1} \over 5d}} +3C_L R_0 \mathcal{E}_0  \\+(2R_0)^{{9d+1} \over {5d}} \sup_{{T^\star \leq s \leq t},\,y \in \mathbb{R}^3} \left(\int_{\mathbb{R}^3} \left|f(s,y,q)\right|\,d q\right)^{{d-1} \over 5d}.\nonumber
\end{gathered}
\end{equation}
By Lemma 4.3 we also have
\begin{equation}
\begin{gathered}
\int_{\mathbb{R}^3} \left| f\left(s,x,p\right)\right|\,dp \leq \int_{|p| \leq L} e^{-\nu(p)s}\left| f_0\left(x-\hat{p}s,p\right)\right|\,dp\\+L^{-l+3}M+6C\eta {R_0}^2+9C_{\eta} {R_0}^2 \mathcal{E}_0. 
\end{gathered}
\end{equation}
We first choosing $\eta$, $L^{-1}$ and 
\begin{equation}
\sup_{T^\star  \leq s \leq t,\,x \in \mathbb{R}^3} \int_{|p| \leq L} e^{-\nu(p)s}\left| f_0\left(x-\hat{p}s,p\right)\right|\,dp\nonumber
\end{equation}
small, and second choosing $\mathcal{E}_0$ sufficiently small, so that $$\sup_{0 \leq s \leq t} \left|\left|h(s)\right|\right|_{L^\infty_p \left(L^2_x \cap L^\infty_x \right)} \leq R_0+4 \times {R_0 \over 5} \leq {9 \over 5}R_0.$$
This proves the theorem.
\end{proof}

\begin{center}{
\section{Nonlinear decay theory}
}\end{center}
To prove Theorems 1.2, we will need linear decay estimates.
The semigroup $U_l (t) g_0$ denotes the solution to the following linearized equation
\begin{eqnarray}
  \left\{
    \begin{array}{l}
       \left[\partial_t +\hat{p} \cdot \nabla_x +\nu(p)-K_l\right]g=0, \\
       g(0,x,p)=g_0 (x,p).
    \end{array}
  \right.
\end{eqnarray}
We also denote $U(t)=U_0 (t)$. 
It is easy to check that
\begin{equation}
\begin{gathered}
U_l (t)\left\{ w_l g_0\right\}=w_l (p) U(t) g_0,\\U_l (t) \left\{w_l g_0\right\}=w_{a \over 2} (p) U_{l-{a \over 2}} (t) \left[w_{l-{a \over 2}} g_0\right].
\end{gathered}
\end{equation}
Recall the notation
\begin{equation*}
\varpi (t)=(1+t)^{-\sigma_r},\quad \sigma_r=- {3 \over 2} \left({1 \over r}-{1 \over 2}\right), \quad 1 \leq r \leq 2.
\end{equation*}

\begin{thm} Fix $l \geq 0,\,r \in [1,2].$ Suppose that initially we have $w_l g_0 \in L^\infty_p \left(L^2_x \cap L^\infty_x \right),$ then the semigroup satisfies
\begin{equation}
\begin{gathered}
\left|\left|U_l (t) g_0\right|\right|_{L^\infty_{p} \left(L^2_x \cap L^\infty_x\right)}\lesssim (1+t)^{-\sigma_r} \left(\left|\left|w_{l}g_0\right|\right|_{L^\infty_{p} \left(L^2_x \cap L^\infty_x\right)}+\left|\left|g_0\right|\right|_{L^2_{p} L^r_x}\right).
\end{gathered}
\end{equation}
\end{thm}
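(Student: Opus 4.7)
The plan is to bootstrap from an $L^2_{x,p}$-level decay estimate for the semigroup $U(t)$ up to the $L^\infty_p(L^2_x \cap L^\infty_x)$-level decay. Using the conjugation identity (5.2), i.e. $U_l(t)\{w_l g_0\} = w_l(p) U(t) g_0$, it suffices to work with the unweighted semigroup $g(t,x,p) = U(t) g_0$, which satisfies the mild formulation
$$
g(t,x,p) = e^{-\nu(p) t} g_0(x-\hat{p}t, p) + \int_0^t e^{-\nu(p)(t-s)} K(g)(s, x - \hat{p}(t-s), p)\, ds .
$$
As input I would first establish the base-level estimate
$$
\|U(t) g_0\|_{L^2_{x,p}} \lesssim (1+t)^{-\sigma_r}\bigl(\|g_0\|_{L^2_{x,p}} + \|g_0\|_{L^2_p L^r_x}\bigr)
$$
via the standard Fourier-analytic / macro-micro decomposition of the linearized operator $L = \nu - K$, using coercivity of the microscopic part (uniform since $\nu(p) \gtrsim (p^0)^{a/2}$ by Lemma 2.1) together with the heat-like low-frequency behavior of the hydrodynamic modes; this is the hard potential analog of the argument in [5].

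Next I would iterate the Duhamel formula twice. The free transport contribution $e^{-\nu(p)t} g_0(x-\hat{p}t,p)$ decays exponentially in $p^0$ (after exchanging weights through $w_l$), and trivially satisfies the required bound. For the $K$-term, split $K_l = K^\chi_l + K^{1-\chi}_l$. The small piece $K^{1-\chi}_l$ is absorbed using Lemma 2.4 with $\eta$ small (paying a small multiple of $\sup_s\|h(s)\|$). The main piece, iterated once more, produces a double time integral involving $K^\chi_l K^\chi_l$; here I would apply Proposition 4.1, which gives $\int |k^\chi_l(p,q)|\,dq \lesssim \nu(p)(p^0)^{-\zeta}$. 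At this stage the velocity variable is integrated against the kernel $k^\chi_l$, which by Cauchy--Schwarz converts an $L^\infty_q$ norm of $g(s_1,\cdot,q)$ into an $L^2_q$ norm, and this is exactly what allows the $L^2_{x,p}$ decay from Step 1 to be inserted. For the $L^\infty_x$ piece, the same iteration is used, leveraging that $k^\chi_l(p,\cdot)$ is square-integrable in $q$ to trade pointwise control in $x$ for $L^2_x$ control at an intermediate time.

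The main technical obstacle is the time convolution $\int_0^t (1+t-s)^{-\sigma_r}(1+s)^{-\sigma_r}\,ds$, which in general degrades the decay rate. I would handle this by the standard split at $t/2$: on $[0,t/2]$, the factor $e^{-c\nu(p)(t-s)}$ (with $\nu(p)\gtrsim (p^0)^{a/2}>0$) delivers exponential decay in $t-s\geq t/2$, absorbing the polynomially-growing $L^2$ norm of $g(s)$; on $[t/2,t]$, the prefactor $(1+s)^{-\sigma_r}\leq C(1+t)^{-\sigma_r}$ can be pulled out, leaving a time-integral $\int_{t/2}^t e^{-c\nu(p)(t-s)}\nu(p)\,ds = O(1)$. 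Combined with Proposition 4.1 and the weight-exchange identity (5.2), this closes the estimate at the rate $(1+t)^{-\sigma_r}$ in both the $L^\infty_p L^2_x$ and $L^\infty_p L^\infty_x$ components, yielding the stated bound.
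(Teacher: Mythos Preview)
Your overall architecture matches the paper's: iterate Duhamel twice, split $K = K^\chi + K^{1-\chi}$, absorb $K^{1-\chi}$ via Lemma~2.4, and feed in the $L^2_{x,p}$ decay $\|U(t)g_0\|_{L^2_{x,p}} \lesssim (1+t)^{-\sigma_r}\|g_0\|_{L^2_p(L^2_x\cap L^r_x)}$ to control the main piece. Your time-integral handling via the $t/2$ split is equivalent to the paper's use of the exponential factor $e^{-c(t-s)}$.

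The genuine gap is the $L^\infty_x$ step. You say that square-integrability of $k^\chi_l(p,\cdot)$ lets you ``trade pointwise control in $x$ for $L^2_x$ control at an intermediate time,'' but Cauchy--Schwarz in the velocity variables alone cannot do this. After the second iteration the main term at a \emph{fixed} $x$ is
\[
\int dq_1\, k^\chi(p,q_1)\int dq_2\, k^\chi(q_1,q_2)\, g\bigl(s_2,\, y_2,\, q_2\bigr),
\qquad y_2 = x - \hat p\,(t-s_1) - \hat q_1\,(s_1 - s_2).
\]
Cauchy--Schwarz in $q_2$ yields $\|g(s_2,y_2,\cdot)\|_{L^2_{q_2}}$ at the single spatial point $y_2$; to reach $\|g(s_2)\|_{L^2_{x,p}}$ you must perform the change of variables $q_1 \mapsto y_2$ in the outer integral. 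This is the dispersive mechanism of free transport, and its Jacobian behaves like $(1+|q_1|^2)^{5/2}(s_1-s_2)^{-3}$, which is singular at $s_1 = s_2$. The paper therefore (i)~restricts to a compact momentum region $|p|\le N$, $|q_1|\le 2N$, $|q_2|\le 3N$ so the kernels become \emph{bounded} rather than merely integrable and survive the change of variables, and (ii)~introduces a small parameter $\kappa>0$, splitting off the strip $s_1-\kappa \le s_2 \le s_1$ (bounded trivially by $C_N\kappa\,\|\varpi_r w_l g\|$) before applying $q_1\mapsto y_2$ on $s_2 \le s_1 - \kappa$. Neither ingredient appears in your sketch; without them you would be left estimating $\sup_x \|g(s_2,x,\cdot)\|_{L^2_q}$, which is circular.
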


Moreover, by an interpolation, we have
\begin{equation}
\begin{gathered}
\left|\left|U_l (t) g_0\right|\right|_{L^\infty_{p} \left(L^2_x \cap L^\infty_x\right)}\lesssim (1+t)^{-\sigma_r} \left|\left|w_{l}g_0\right|\right|_{L^\infty_{p} \left(L^1_x \cap L^\infty_x\right)}.
\end{gathered}
\end{equation}

\begin{proof}[Proof of Theorem 5.1] This case is much easier to handle than soft potentials. First, we can write the solution $g(t)=U_l (t) g_0$ to (5.1), as
\begin{equation}
\begin{gathered}
g(t,x,p)=e^{-\nu(p)t} g_0 \left(x-\hat{p}t,p\right) +\int_0^t d s_1\,e^{-\nu(p)(t-s_1)} K^{1-\chi} (g) (s_1,y_1,p)\\
+\int_0^t d s_1\, e^{-\nu(p)(t-s_1)}\int_{\mathbb{R}^3} d q_1\, k^\chi (p,q_1)e^{-\nu(p)s_1} f_0 (y_1 -\hat{q_1} s_1,q_1)\\
\hspace{-5mm}+\int_{\mathbb{R}^3} dq_1 \, k^\chi (p,q_1) \int_0^t d s_1 \int_0^{s_1} d s_2 \, e^{-\nu(p)(t-s_1)} e^{-\nu(q_1)(s_1-s_2)} \\ \hspace{70mm} \times \,K^{1-\chi}(g)(s_2,y_2,q_1)\\
\hspace{-9mm}+\int_{\mathbb{R}^3} d q_1\, k^\chi (p,q_1) \int_{\mathbb{R}^3} d q_2\, k^\chi (q_1,q_2) \int_0^t d s_1 \,e^{-\nu(p)(t-s_1)} \\ \hspace{40mm} \times \int_0^{s_1} d s_2\,e^{-\nu(q_1)(s_1 -s_2)} (1-\chi_{R_N})\,g (s_2 ,y_2 , q_2)\\
\hspace{-9mm}+\int_{\mathbb{R}^3} d q_1\, k^\chi (p,q_1) \int_{\mathbb{R}^3} d q_2\, k^\chi (q_1,q_2) \int_0^t d s_1 \,e^{-\nu(p)(t-s_1)} \\ \hspace{49mm} \times \int_0^{s_1} d s_2\,e^{-\nu(q_1)(s_1 -s_2)} \chi_{R_N} \,g (s_2 ,y_2 , q_2),
\end{gathered}
\end{equation}
where $\chi_{R_N}$ is the characteristic function of $\left\{|p| \leq N\,|q_1| \leq 2N ,\, |q_2| \leq 3 N\right\}$, and
\begin{equation}
\begin{gathered}
y_1=x-\hat{p}(t-s_1),\\y_2=y_1-\hat{q_1}(s_1 -s_2)=x-\hat{p}(t-s_1)-\hat{q_1} (s_1 -s_2).\nonumber
\end{gathered}
\end{equation}

Now we estimate the $L^\infty_p ( L^2_x \cap L^\infty_x)$ norm of (5.5) multiplied by $w_l(p)$. For the first five terms of (5.5), we can estimate them as in each term of (4.9). The norms of these terms multiplied by $w_l(p)$ are bounded by
\begin{equation}
C_\chi (1+t)^{-\sigma_r}\left|\left| w_l g_0 \right|\right|_{L^\infty_{p} \left(L^2_x \cap L^\infty_x \right )},\nonumber
\end{equation}
or when $N$ large enough, for any $\eta >0$,
\begin{equation}
\eta (1+t)^{-\sigma_r}\left|\left|\varpi_r w_l g \right|\right|_{L^\infty_{p,t} \left(L^2_x \cap L^\infty_x \right )}.\nonumber
\end{equation}
Notice, in particular, that $w_l (p) e^{-c|p-q_1|} \lesssim w_l (q_1 ) e^{-{c \over 2}{|p-q_1|}}$ from (4.7).

We only estimate the last term of (5.5). For $\kappa>0$ we clearly have
\begin{equation}
\begin{gathered}
\hspace{-15mm}\left|\left|w_l (p)\int_{\mathbb{R}^3} d q_1\, k^\chi (p,q_1) \int_{\mathbb{R}^3} d q_2\, k^\chi (q_1,q_2) \int_\kappa^t d s_1 \,e^{-\nu(p)(t-s_1)} \right.\right. \\ \hspace{35mm} \left.\left.\times \int_{s_1 - \kappa}^{s_1} d s_2\,e^{-\nu(q_1)(s_1 -s_2)} \chi_{R_N} \,g (s_2 ,y_2 , q_2) \right|\right|_{L^2_x \cap L^\infty_x}\\ \hspace{-10mm}+\left|\left|w_l (p)\int_{\mathbb{R}^3} d q_1\, k^\chi (p,q_1) \int_{\mathbb{R}^3} d q_2\, k^\chi (q_1,q_2) \int_0^\kappa d s_1 \,e^{-\nu(p)(t-s_1)} \right.\right. \\ \hspace{35mm} \left.\left.\times \int_{0}^{s_1} d s_2\,e^{-\nu(q_1)(s_1 -s_2)} \chi_{R_N} \,g (s_2 ,y_2 , q_2) \right|\right|_{L^2_x \cap L^\infty_x}
\\ \leq C_{\chi,N} \kappa (1+t)^{-\sigma_r} \left|\left| \varpi_r w_l g \right| \right|_{L^\infty_{q,t} \left(L^2_x \cap L^\infty_x\right)}.\nonumber
\end{gathered}
\end{equation}
For the remaining part of the last term of (5.5) we use the following $L^2$ decay estimate :
\begin{equation}
\left|\left| U (t) g_0 \right|\right|_{L^2_{x,p}} \lesssim (1+t)^{-\sigma_r} \left|\left|g_0 \right|\right|_{L^2_p \left(L^2_{x} \cap L^r_x\right)}, \quad l \geq 0,\,r \in [1,2]. \nonumber
\end{equation}
This assertion follows as in [7]. We also refer to Section 2 of [5] (which handles soft potentials).
Then, we have
\begin{equation}
\begin{gathered}
\hspace{-15mm}\left|\left|w_l (p)\int_{|q_1| \leq 2N} d q_1\, k^\chi (p,q_1) \int_{|q_2| \leq 3N} d q_2\, k^\chi (q_1,q_2) \int_{\kappa}^t d s_1 \,e^{-\nu(p)(t-s_1)} \right.\right. \\ \hspace{35mm} \left.\left.\times \int_0^{s_1 - \kappa} d s_2\,e^{-\nu(q_1)(s_1 -s_2)} \chi_{R_N} \,g (s_2 ,y_2 , q_2) \right|\right|_{L^2_x \cap L^\infty_x}\\ \leq C_{\kappa,N} \int_\kappa^t ds_1\,e^{-c(t-s_1)} \int_0^{s_1 -\kappa} ds_2 \, e^{-c(s_1-s_2)}\left|\left| g (s_2) \right|\right|_{L^2_{x,p}}\\ \leq C_{\kappa,N} \left|\left|g_0 \right|\right|_{L^2_p \left(L^2_{x} \cap L^r_x\right)} \int_0^t e^{-{c \over 2}(t-s_2)} (1+s_2)^{-\sigma_r}\,d s_2\\ \leq C_{\kappa,N} (1+t)^{-\sigma_r}\left(\left|\left|w_l g_0 \right|\right|_{L^\infty_p L^2_x}+\left|\left|g_0 \right|\right|_{L^2_p L^r_x}\right).\nonumber
\end{gathered}
\end{equation}
Here we have used, for the $L^2_x$ norm, Cauchy-Schwartz inequality  for the $q_2$ integral :
\begin{equation}
\begin{gathered}
\left|\int_{|q_2| \leq 3N} d q_2 \,k^\chi (q_1, q_2) g(s_2,y_2,q_2) \right| \leq C_{\chi,N} \left|\left|g\right|\right|_{L^2_{q_2}} (s_2, y_2).\nonumber
\end{gathered}
\end{equation}
For the $L^\infty_x$ norm, we have used the change of variable $q_1 \rightarrow y_2$. Note that $$d q_1={(1+|q_2 |^2)^{5 \over 2} \over (s_1 -s_2)} dy_2.$$
By first choosing $\eta$ and $N^{-1}$ small, and second choosing $\kappa$ sufficiently small, we obtain (5.3).
\end{proof}

\begin{proof}[Proof of Theorem 1.3] We can write $h=w_l f$ where $f$ is constructed in Theorem 1.1, as 
\begin{equation}
\begin{gathered}
h(t,x,p)=\left\{U_l (t) h_0 \right\}(x,p)+\int_0^t \left\{U_l (t-s) w_l \Gamma (f(s),f(s)) (x,p)\right\}\,ds.
\end{gathered}
\end{equation}
Moreover, we can split the second term into
\begin{equation}
\begin{gathered}
\int_0^t e^{-\nu(p)(t-s)} \left\{w_l (p) \Gamma (f(s),f(s)) \right\}(x,p)\, ds\\+\int_0^t \int_s^t e^{-\nu(p)(t-s_1)} \left\{K^\chi_l U (s_1 -s) w_l \Gamma (f(s),f(s)) \right\}(x,p) \,ds_1 \,ds\\+\int_0^t \int_s^t e^{-\nu(p)(t-s_1)} \left\{K^{1-\chi}_l U (s_1 -s) w_l \Gamma (f(s),f(s)) \right\}(x,p) \,ds_1 \,ds.
\end{gathered}
\end{equation}
We denote each term of (5.7) by $E_1,\,E_2$ and $E_3$ in order from the top. 

First, from the conditions on initial data, (4.12) and (4.13), for any $\eta>0$ we can obtain
\begin{equation}
\begin{gathered}
\sup_{t,\,x} \int_{\mathbb{R}^3} \left|f(t,x,p)\right|\, dp+\sup_t \int_{\mathbb{R}^3} \left|\left| f \right|\right|_{L^2_x} (t,p)\,dp \leq {\eta}^{5d \over {d-1}}.
\end{gathered}
\end{equation}
Since $h(t,x,p)$ is bounded, by using Lemma 2.6 we have
\begin{equation}
\begin{gathered}
\left|\left| E_1 \right|\right|_{L^2_x \cap L^\infty_x} \leq C \int_0^t e^{-\nu(p)(t-s)} w_l (p) \left|\left| \Gamma \left(f(s),f(s)\right) \right|\right|_{L^1_x \cap L^\infty_x}\,ds\\ \leq C \eta \int_0^t e^{-\nu(p)(t-s)} \nu(p) \left|\left| h(s) \right|\right|^{{9d+1} \over 5d}_{L^\infty_p \left(L^1_x \cap L^\infty_x \right)}\,ds\\ \leq C_M \eta \left|\left| \varpi_r h \right|\right|_{L^\infty_{t,p} \left(L^2_x \cap L^\infty_x \right)} \int_0^t e^{-\nu(p)(t-s)} \nu(p) (1+s)^{-\sigma_r}\,ds\\ \leq C_M \eta (1+t)^{-\sigma_r} \left|\left| \varpi_r h \right|\right|_{L^\infty_{t,p} \left(L^2_x \cap L^\infty_x \right)}.
\end{gathered}
\end{equation}
For $E_2$ and $E_3$ we use (5.2) and (5.4). These and (4.7) yield
\begin{equation}
\begin{gathered}
\left|\left|K^\chi_l U (s_1 -s) w_l \Gamma \left(f(s),f(s)\right)\right|\right|_{L^2_x \cap L^\infty_x}\\ \leq C\int_{\mathbb{R}^3} \left| k^\chi_l (p,q) \right| \left(q^0 \right)^{a \over 2}dq
\left|\left| U_{l-{a \over 2}} (s_1 -s) w_{l-{a \over 2}} \Gamma \left(f(s),f(s)\right) \right|\right|_{L^\infty_{q_1} \left(L^2_x \cap L^\infty_x \right)}\\ \leq C\int_{\mathbb{R}^3} \left| k^\chi_l (p,q) \right| \left(p^0 \right)^{a \over 2}dq
\left|\left| U_{l-{a \over 2}} (s_1 -s) w_{l-{a \over 2}} \Gamma \left(f(s),f(s)\right) \right|\right|_{L^\infty_{q_1} \left(L^2_x \cap L^\infty_x \right)}\\ \leq C_{\chi} (1+s_1 -s)^{-\sigma_r}\nu(p)\left|\left| \nu^{-1} w_{l} \Gamma \left(f(s),f(s)\right) \right|\right|_{L^\infty_{q_1} \left(L^1_x \cap L^\infty_x \right)}.\nonumber
\end{gathered}
\end{equation}
From combining (5.2) and (5.4) with Lemma 2.4 we get
\begin{equation}
\begin{gathered}
\left|\left|K^{1-\chi}_l U (s_1 -s) w_l \Gamma \left(f(s),f(s)\right)\right|\right|_{L^2_x \cap L^\infty_x}\\ \leq K^{1-\chi}_l \left(\left|\left| U (s_1 -s) w_l \Gamma \left(f(s),f(s)\right)\right|\right|_{L^2_x \cap L^\infty_x} \right)\\ \leq K^{1-\chi}_l  \left(\left|\left| w_{a \over 2} U_{l-{a \over 2}} (s_1 -s) w_{l-{a \over 2}} \Gamma \left(f(s),f(s)\right)\right|\right|_{L^2_x \cap L^\infty_x} \right)\\ \leq \eta \left|\left|U_{l-{a \over 2}} (s_1 -s) w_{l-{a \over 2}} \Gamma \left(f(s),f(s)\right)\right|\right|_{L^\infty_{q_1} \left(L^2_x \cap L^\infty_x\right)}\\ \leq C \eta (1+s_1 -s)^{-\sigma_r}\left|\left|\nu^{-1}w_{l} \Gamma \left(f(s),f(s)\right)\right|\right|_{L^\infty_{q_1} \left(L^1_x \cap L^\infty_x\right)}.\nonumber
\end{gathered}
\end{equation}
Since
\begin{equation}
\begin{gathered}
\int_0^t \int_s^t e^{-\nu(p)(t-s_1)} \nu(p)(1+s_1 -s)^{-\sigma_r} (1+s)^{-\sigma_r}\,ds_1\,ds \lesssim (1+t)^{-\sigma_r},\nonumber
\end{gathered}
\end{equation}
as in (5.9), we have
\begin{equation}
\begin{gathered}
\left|\left| E_2 \right|\right|_{L^2_x \cap L^\infty_x}+\left|\left| E_3 \right|\right|_{L^2_x \cap L^\infty_x} \leq C_M \eta (1+t)^{-\sigma_r} \left|\left| \varpi_r h \right|\right|_{L^\infty_{t,p} \left(L^2_x \cap L^\infty_x \right)}.\nonumber
\end{gathered}
\end{equation}
Lastly, applying Theorem 5.1 to the first term of (5.6) and choosing $\eta$ small enough, we obtain the desired result.  
\end{proof}

{\it{E-mail address}}: nishimura.koya.42e@kyoto-u.jp  

\end{document}